\newcommand{\si}{\sigma}
\newcommand{\A}{\alpha}
\newcommand{\eps}{\varepsilon}
\def\naw#1{\left( #1 \right)}
\def\abs#1{\left| #1 \right|}
\def\norm#1{\left\| #1 \right\|}
\def\bignaw#1{\big( #1 \big)}
\def\bigabs#1{\big| #1 \big|}
\def\Bignaw#1{\Big( #1 \Big)}
\newcommand{\R}{\mathbb{R}}
\renewcommand{\Pr}{\mathbb{P}}
\newcommand{\II}{\mathbf{1}}
\newcommand{\E}{\mathbb{E}}
\newcommand{\Var}{\text{Var}}
\def\CE#1#2{\E\big[ #1 \big| #2 \big]}
\def\dCE#1#2{\E\left[ #1 \big| #2 \right]}
\newtheorem{theorem}{Theorem}[section]
\newtheorem{lemma}[theorem]{Lemma}
\newtheorem{prop}[theorem]{Proposition}
\theoremstyle{definition}
\newtheorem*{remark1}{Remark}
\author{Pawe{\l{}} Wolff \footnote{Institute of Mathematics. University of Warsaw. Banacha 2. 02-097 Warszawa. POLAND. Email: pwolff@mimuw.edu.pl} \thanks{Research partially supported by MNiSW Grant No. N N201 397437 and FRG grant No. DMS-0652722 from the National Science Foundation}}
\title{On randomness reduction in the Johnson-Lindenstrauss lemma}
\date{}
\begin{document}

\maketitle

\begin{abstract}
\noindent
A refinement of so-called \emph{fast Johnson-Lindenstrauss transform} (Ailon and Chazelle~\cite{ailon-chazelle}, Matou\v{s}ek~\cite{matousek}) is proposed. While it preserves the time efficiency and simplicity of implementation of the original construction, it reduces randomness used to generate the random transformation. In the analysis of the construction two auxiliary results are established which might be of independent interest: a Bernstein-type inequality for a sum of a random sample from a family of independent random variables and a normal approximation result for such a sum.
\end{abstract} 

%The Johnson-Lindenstrauss lemma asserts that any  $N$-point subset of a
%Euclidean space can be embedded almost isometrically into a space (usually $\ell_2$ or $\ell_1$) of dimension of order $\log N$. In 2006, N. Ailon and B. Chazelle~\cite{ailon-chazelle} constructed a family of
%random embeddings which can be applied efficiently (in terms of time complexity). In
%this paper a refinement of that construction is proposed. Its additional advantage is a small
%number of random bits used to generate the embedding. In the analysis of the
%construction two auxiliary results are established which might be of independent interest: a Bernstein-type inequality for a sum of a random sample from a family of independent random variables and a normal approximation result for such a sum.

\medskip

{\small {\bf Keywords:} Johnson-Lindenstrauss lemma; Bernstein inequality; sampling without replacement; normal approximation.}

\medskip

{\small {\bf 2010 Mathematics Subject Classification:} 60E15, 46B85.}

\bigskip

\section{Introduction}

The Johnson-Lindenstrauss lemma~\cite{johnson-lindenstrauss} is the following fact, which might appear quite surprising at the first sight.
\begin{theorem}
Let $\eps \in (0,1)$, $\mathcal{X}$ be an $N$-point subset of $\ell_2^n$ and $d \ge C \frac{\log N}{\eps^2}$, where $C>0$ is some universal constant. Then there exists a (linear) mapping $f \colon \ell_2^n \to \ell_2^d$ such that
\begin{equation}
  \label{ineq:JL-distortion}
  \forall_{x, y \in \mathcal{X}} \quad (1-\eps) \norm{x-y}_2 \le \norm{f(x)-f(y)}_2 \le (1+\eps) \norm{x-y}_2.
\end{equation}
\end{theorem}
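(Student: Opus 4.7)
The plan is to prove the theorem by the probabilistic method: construct a random linear map $f \colon \ell_2^n \to \ell_2^d$ and show that with positive probability it satisfies~\eqref{ineq:JL-distortion}. The natural candidate is $f(x) = \frac{1}{\sqrt{d}} G x$, where $G = (G_{ij})$ is a $d \times n$ matrix with independent standard Gaussian entries (or, equivalently up to constants, independent Rademacher entries). Since $f$ is linear, it suffices to control $\norm{f(x-y)}_2$ for each of the $\binom{N}{2}$ difference vectors and then take a union bound.

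The key probabilistic step is a concentration statement: for any fixed unit vector $u \in \ell_2^n$,
\[
  \Pr\big[ \big| \norm{f(u)}_2^2 - 1 \big| > \eps \big] \le 2 \exp(-c \eps^2 d)
\]
for some absolute constant $c > 0$ and $\eps \in (0,1)$. To establish this, I would observe that by rotational invariance of the Gaussian distribution, the coordinates $(Gu)_i = \sum_j G_{ij} u_j$ are i.i.d.\ standard normal variables $Z_i$, so $d \cdot \norm{f(u)}_2^2 = \sum_{i=1}^d Z_i^2$ is a $\chi^2_d$ random variable. A Chernoff bound applied to the Laplace transform $\E \exp\big(\lambda \sum_i Z_i^2\big) = (1-2\lambda)^{-d/2}$, optimized over $\lambda$, yields the two-sided estimate with exponent $-\Theta(\eps^2 d)$; the matching lower tail is obtained analogously with $\lambda < 0$.

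Next I would apply the concentration inequality with $u = (x-y)/\norm{x-y}_2$ for each of the at most $\binom{N}{2} \le N^2$ pairs of distinct points in $\mathcal{X}$ and union-bound. The probability that $f$ fails on some pair is at most $2 N^2 \exp(-c \eps^2 d)$, which is strictly less than $1$ whenever $d \ge C \log N / \eps^2$ for a sufficiently large constant $C = C(c)$. Hence a realization of $f$ exists for which
\[
  (1-\eps) \norm{x-y}_2 \le \norm{f(x-y)}_2 \le (1+\eps)\norm{x-y}_2
\]
holds for every pair, after passing from the squared bound $|t^2 - 1| \le \eps$ to the linear bound $|t - 1| \le \eps'$ (which only absorbs a constant factor into $C$).

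The only nontrivial step is the chi-squared tail estimate; everything else is bookkeeping, exploiting linearity, homogeneity, and a union bound. I expect no real obstacle beyond choosing the constants carefully so that the squared-to-linear rescaling and the union bound combine cleanly, and verifying that the Chernoff optimization over $\lambda$ gives a Gaussian-type exponent uniformly in $\eps \in (0, 1)$.
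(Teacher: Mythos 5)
Your proof is correct, and it follows the same overall skeleton that the paper sketches for this classical statement: draw a random linear map, prove a concentration estimate for the image of a single fixed vector with failure probability $\exp(-c\eps^2 d)$, and take a union bound over the $\binom{N}{2}$ difference vectors, so that $d\ge C\log N/\eps^2$ makes the total failure probability less than $1$. The only divergence is in the choice of the random model and hence of the key concentration lemma: the paper's sketch of the original argument uses an orthogonal projection onto a random $d$-dimensional subspace, where the per-vector estimate comes from isoperimetry/measure concentration on the Grassmannian, whereas you use an i.i.d.\ Gaussian matrix, where rotational invariance reduces the per-vector estimate to an explicit $\chi^2_d$ Chernoff bound. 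The Gaussian route (which the paper itself cites as the Dasgupta--Gupta variant) is more elementary and self-contained, since the Laplace transform $(1-2\lambda)^{-d/2}$ is computable in closed form; the projection route is the historically original one and exhibits the geometric mechanism more directly. One small remark: your final rescaling step is not even needed, since $\bigl|\norm{f(u)}_2^2-1\bigr|\le\eps$ already implies $1-\eps\le\sqrt{1-\eps}\le\norm{f(u)}_2\le\sqrt{1+\eps}\le 1+\eps$, so no constant is lost there. Note also that the paper's actual contribution is a different construction altogether (a sparse matrix $PHD$ with rows sampled without replacement, analyzed via a Bernstein-type inequality and a normal approximation result), which is aimed at time and randomness efficiency rather than at reproving the classical lemma.
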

Despite the original, purely theoretical motivation for the Johnson-Lindenstrauss lemma, it quickly became clear that this fact is of great importance in applications, especially in designing of algorithms which process high dimensional data (see e.g.~\cite{indyk-motwani, achlioptas} and references therein). For this reason, several application-oriented variants of the above result appeared quite recently, e.g.~\cite{achlioptas, ailon-chazelle, matousek, ailon-liberty, hinrichs-vybiral, vybiral, kane-meka-nelson}.
In this paper we propose a refinement of the results of Ailon and Chazelle~\cite{ailon-chazelle} and Matou\v{s}ek~\cite{matousek}. In order to put our work into context, we briefly sketch a general idea behind classical proofs of the Johnson-Lindenstrauss lemma and comment on the papers~\cite{ailon-chazelle} and~\cite{matousek} in some more details.

Most of the known proofs of the Johnson-Lindenstrauss lemma provide the existence of the map $f$ by drawing it according to some probability distribution on the space of $d \times n$ matrices and showing that it satisfies~(\ref{ineq:JL-distortion}) with positive probability.
The original proof of the Johnson-Lindenstrauss lemma~\cite{johnson-lindenstrauss} takes the map $f$ to be an orthogonal projection onto a random $d$-dimensional subspace of $\ell_2^n$ (a random subspace means here a subspace drawn according to the normalized Haar measure on the Grassmannian $G_{d,n}$). It turns out (by means of a concentration inequality or, as originally, isoperimetry) that whenever $d \ge \frac{K}{\eps^2}$ for a given constant $K > 0$, the probability that $f$ maps any fixed vector $v \in \ell_2^n$ onto a vector of length $(1 \pm \eps) (d/n)^{1/2} \norm{v}_2$ is at least $1 - C \exp(-c K)$, where $c, C >0$ are universal constants. Taking $K$ of order $\log N$ ensures that the failure probability is less than $N^{-2}$ thus taking the union bound over ${N \choose 2}$ vectors $v = x - y$ with $x, y \in \mathcal{X}$ the probability that the map $(n/d)^{1/2} f$ fails~(\ref{ineq:JL-distortion}) is less than $1/2$. Instead of using random orthogonal projections one can use a random matrix with entries that are i.i.d. Gaussian random variables~\cite{dasgupta-gupta} or a properly normalized matrix of independent random signs~\cite{achlioptas}. In each of these cases, however, the time complexity of evaluating $f(x)$ for a single point $x \in \mathcal{X}$ is $O(n d) = O(n \log N / \eps^2)$ which is not satisfactory for many applications. Also, the amount of randomness (measured in number of random bits, i.e. unbiased coin tosses) required to generate $f$ is $O(n d)$.

Ailon and Chazelle~\cite{ailon-chazelle} proposed a construction of a random map $f$ which they called a \emph{fast Johnson-Lindenstrauss transform} for the reason that in a wide range of parameters (basically $N$ vs. $n$) it is computationally more efficient than the constructions described above.  Assuming $n$ is a power of 2, they take $f = P H D$ where $D$ is an $n \times n$ diagonal matrix of random signs, $H$ is the matrix of the Walsh-Hadamard transform on $\ell_2^n$ normalized by the factor $1/\sqrt{n}$ (so that $H$ is an orthogonal matrix with all entries being $\pm 1/\sqrt{n}$), and $P$ is some sparse random $d \times n$ matrix. Clearly, the transformation $H D$ is an isometry on $\ell_2^n$. Moreover, it can be shown that with probability close to $1$, $HD$ maps any fixed unit vector $u \in \ell_2^n$ onto a vector $v \in \ell_2^n$ with small $\ell_\infty$-norm. More precisely, for any constant $C > 0$, if $\norm{u}_2 = 1$ and $V = H D u$, then with probability at least $1 - N^{-C}$,
\begin{equation}
  \label{ineq:l-infty-norm-is-small}
  \norm{V}_\infty \le C' \frac{\sqrt{\log N}}{\sqrt{n}},
\end{equation}
where $C' = C'(C) > 0$ is a constant depending on $C$ only. This property is essential for the construction of the matrix $P$ which is as follows: fix $q \in (0,1]$ and set $P$ to be a matrix of independent random entries, each entry equals $0$ with probability $1-q$ and with probability $q$ is distributed $\mathcal{N}(0, 1/(dq))$. It turns out that for any fixed unit vector $v \in \ell_2^n$ satisfying $\norm{v}_\infty \le c \sqrt{q}/\sqrt{\log (N/\eps)}$, the probability that $1-\eps \le \norm{P v}_2 \le 1+\eps$ is at least $1 - N^{-C(c)}$. Together with~(\ref{ineq:l-infty-norm-is-small}) it implies that the map $f$ will work whenever  $q \ge C (\log N) \log (N / \eps) / n$. This means the expected time of applying $P$ to a single vector is $O(d q n) = O(\log^3 N / \eps^2)$ (here we assume $\log(1/\eps) = O(\log N)$). Since the transformation $H D$ can be applied in time $O(n \log n)$ using the Fast Fourier Transform over the group $(\mathbb{Z}_2)^n$, this construction beats the previous approaches in terms of time complexity whenever $\log N = o(n^{1/2})$ and $\log N = \omega(\log n)$.

Since the usage of Gaussian random variables in the matrix $P$ generally causes some extra technical problems in a practical implementation, Matou\v{s}ek~\cite{matousek} refined the result of Ailon and Chazelle replacing Gaussian r.v.'s with random signs (Bernoulli $\pm 1$ r.v.'s). Also, in both papers, a similar property for the map $f$ as a map from $\ell_2^n$ into $\ell_1^d$ was proved.

Generating the matrix $P$ described above requires roughly $n d \log_2(1/q)$ random bits. In this paper we propose a variant of construction of Alion-Chazelle and Matou\v{s}ek which save on the amount of randomness used. Instead of fully independent entries, we let $P$ to have only independent rows, and within each row we choose $k = nq$ entries at random (without replacement) in which we put a random sign, while the remaining entries are zeros. This can be done using $O(k \log n) = O(\log^2 N \log n)$ random bits per row (see Section~\ref{section:results} for details). Additionally, if we replace the independent Bernoulli $\pm 1$ random variables from the diagonal matrix $D$ with variables which are only $O(\log N)$-wise independent (see e.g.~\cite[Proposition 6.5]{alon} or \cite[Chapter 7.6, Theorem 8]{MS}), then the construction of the map $P H D$ uses only $O(\log^3 N \log n / \eps^2)$ random bits and keeps the computational efficiency and simplicity of practical implementation of the constructions from~\cite{ailon-chazelle} and~\cite{matousek}. 

The probabilistic analysis of our construction, similarly to the one done by Matou\v{s}ek in~\cite{matousek}, relies on tail estimates for sums of random variables. However, we have to deal with sums of not independent random variables, which is due to the sampling without replacement procedure used to generate sparse rows of the matrix $P$. The main tools we established to perform the analysis is a Bernstein-type inequality and the $L^1$ Berry-Esseen bound for a sum of a random sample from a family of independent random variables. Although these results are not entirely new (see the comments following Theorem~\ref{theorem:sparse-bernstein} and Theorem~\ref{theorem:stein-approximation}), we believe they still might be of some interest. Also, having potential applications of the result in mind, we provide explicit and reasonable numerical constants in estimates of parameters of our construction.

To finish the introduction, let us mention that several results in the area of efficient Johnson-Lindenstrauss embeddings appeared recently, see~\cite{kane-meka-nelson} and references therein. Although these results beat our construction in terms of amount of randomness, the methods used there are (at least in part) quite different from ours and do not seem to work in the case of embedding into $\ell_1$.

\section{Notation}
\label{section:notation}

Throughout this work
$\eps \in (0,1)$, $\delta \in (0,\frac12)$ and a positive integer $n$ are fixed parameters. Our goal is to construct a random linear map $f$ which acts from $\ell_2^n$ to a space ($\ell_2$ or $\ell_1$) of a smaller dimension and satisfies the property that for any fixed $u \in \ell_2^n$,
\[
  \Pr\bignaw{ (1-\eps) \norm{u}_2 \le \norm{f(u)} \le (1+\eps) \norm{u}_2 } \ge 1-2\delta.
\]

Assume $n$ is a power of 2 (if necessary we pad $u$ with zeros). Let $d \ge 1$ and $1 \le k \le n$ be integers to be specified later. We shall consider the following families of random variables:
\begin{itemize}
  \item $\beta_1, \beta_2, \ldots, \beta_n$ are symmetric $\pm 1$ random variables and $l$-wise independent with $l := 2\lceil \log(n/\delta) \rceil$, i.e. any $l$ r.v.'s among $\beta_1, \ldots, \beta_n$ are independent. If $l > n$ then $\beta_1, \ldots, \beta_n$ are just independent.
  \item $\eps_1, \eps_2, \ldots, \eps_n$ are independent symmetric $\pm 1$ Bernoulli random variables. The random vectors $(\eps_{i,1}, \eps_{i,2}, \ldots, \eps_{i,n})$ ($i=1, \ldots, d$) are independent copies of $(\eps_1, \eps_2, \ldots, \eps_n)$.
  \item $\xi_1, \xi_2, \ldots, \xi_n$ are $0$--$1$ random variables such that the distribution of a random set $\{ j \colon \xi_j = 1\} \subset \{1, \ldots, n\}$ is uniform over all subsets of $\{1, \ldots, n\}$ with cardinality $k$. In the other words, for any $J \subset \{1, \ldots, n\}$ with cardinality $k$, $\Pr(\{ j \colon \xi_j = 1 \}
= J) = 1/{n \choose k}$. The random vectors $(\xi_{i,1}, \xi_{i,2}, \ldots, \xi_{i,n})$ ($i=1, \ldots, d$) are independent copies of $(\xi_1, \xi_2, \ldots, \xi_n)$.
\end{itemize}
Moreover, the three families are independent among themselves, that is $\sigma(\beta_j \colon j \in \{1, \ldots, n\})$, $\sigma(\eps_j, \eps_{i,j} \colon i \in \{1, \ldots, d\}, j \in \{1, \ldots, n\})$, $\sigma(\xi_j, \xi_{i,j} \colon i \in \{1, \ldots, d\}, j \in \{1, \ldots, n\})$ are independent.

%,\ \colon j = 1, \ldots, n), \sigma\sigma(\xi_{1,j} \colon j = 1, \ldots, n), \ldots, \sigma(\xi_{d,j} \colon j = 1, \ldots, n), \sigma(\eps_{11}), \ldots, \sigma(\eps_{dn})$
%Consider the following random variables:
%$\beta_1, \ldots, \beta_n$ and $\xi_{i,j}$, $\eps_{ij}$ for $i = 1,\ldots, d$, $j = 1, \ldots, n$ where $\sigma(\beta_j \colon j = 1, \ldots, n), \sigma(\xi_{1,j} \colon j = 1, \ldots, n), \ldots, \sigma(\xi_{d,j} \colon j = 1, \ldots, n), \sigma(\eps_{11}), \ldots, \sigma(\eps_{dn})$ are independent. Moreover, $\beta_1, \ldots, \beta_n$ are $l$-independent random variables with $l = 2\lceil \log(1/\delta) + \log n \rceil$, i.e. any $l$ of these random variables are independent, and the distribution of each $\beta_j$ is symmetric $\pm 1$ Bernoulli distribution. Further, for each $i=1, \ldots, d$, $\xi_{i1}, \ldots, \xi_{in}$ are $0$--$1$ random variables and the distribution of sets $\{ j \colon \xi_{ij} = 1\} \subset \{1, \ldots, n\}$ is uniform over all subsets of $\{1, \ldots, n\}$ with cardinality $k$. In the other words, for any $J \subset \{1, \ldots, n\}$ with cardinality $k$, $\Pr(\{ j \colon \xi_{ij} = 1 \}
%= J) = 1/{n \choose k}$. Finally, $\eps_{ij}$ are simply symmetric $\pm 1$ Bernoulli random variables.

For $q \in \{1, 2\}$, define a random linear map $f_q \colon \ell_2^n \to \ell_q^d$ by
\begin{equation}
  \label{eq:f}
  f_q = \frac{1}{d^{1/q}} P H D
\end{equation}
where
\[ %\begin{split}
  D = %\text{diag}(\beta_1, \ldots, \beta_n) = 
     \begin{pmatrix} \beta_1 && 0\\ &\ddots& \\ 0&&\beta_n
      \end{pmatrix},
%\\[1ex]
\qquad
  P = %\sqrt{\frac{n}{k}} \Big(\xi_{i,j} \eps_{i,j}\Big)_{\substack{i \le d \\[.5ex] j \le n,}}
 \sqrt{\frac{n}{k}}
 \begin{pmatrix}
  \xi_{1,1} \eps_{1,1} & \cdots & \xi_{1,n} \eps_{1,n} \\
    \vdots & \ddots & \vdots \\
  \xi_{d,1} \eps_{d,1} & \cdots & \xi_{d,n} \eps_{d,n}
 \end{pmatrix}
 %\\[1ex]
%  P_q &= \frac{1}{d^{1/q}} P \colon \ell_2^n \to \ell_q^d,
%\end{split} \]
\]
and $H = H_n$ is the normalized Walsh-Hadamard matrix of size $n \times n$, that is the orthogonal matrix defined by the recursive formula
\[ \begin{split}
  H_n &= \frac{1}{\sqrt{2}} \begin{pmatrix} H_{n/2} & H_{n/2} \\ H_{n/2} & -H_{n/2} \end{pmatrix} \quad\text{if $n > 1$,} \\[1ex]
  H_1 &= (1).
\end{split} \]

The function $\log$ stands for the natural logarithm. We write $\norm{\cdot}_q$ for the $\ell_q$ norm ($1 \le q \le \infty$).

\section{The results}\label{section:results}

The main result of this paper asserts that when $d$ and $k$ are large enough, for any fixed vector $u \in \ell_2^n$ the random transformation $f_q \colon \ell_2^n \to \ell_q^d$ ($q = 1, 2$) with probability close to 1 almost preserves the norm of $u$.
\begin{theorem}[the $\ell_2$ case]
  \label{thm:l2l2}
  Assume $n$ is a power of $2$,
  \[
    d \ge 1.55 \frac{(1+2 \eps)^2}{\eps^2} \log(3/\delta) \quad \text{and} \quad
    k \ge \max\naw{\frac{8 e}{3} \log(6d/\delta), 20e} \log(2n/\delta).
  \]
Then the random linear transformation $f_2$ as defined in~(\ref{eq:f}) satisfies
  \[
% \begin{equation}   \label{eq:almost-isometry-l2l2}
    \forall_{u \in \ell_2^n} \ \Pr\bignaw{(1+\eps)^{-1} \norm{u}_2 \le \norm{f_2(u)}_2 \le (1+\eps) \norm{u}_2} \ge 1 - 2\delta
 \]% \end{equation}
  provided $k \le n$.
\end{theorem}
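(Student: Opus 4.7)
The plan is to adopt the standard two-stage analysis of fast Johnson--Lindenstrauss transforms: first prove that the preconditioner $HD$ flattens any fixed unit vector $u$ into $V := HDu$ with small $\ell_\infty$-norm, then prove that, conditionally on $V$ being flat, the sparse projection $P$ nearly preserves $\|V\|_2=\|u\|_2$. By homogeneity one may assume $\|u\|_2=1$, in which case $\|V\|_2=1$ holds deterministically since $HD$ is orthogonal.

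For the flattening step I would show $\Pr(\|V\|_\infty>\eta)\le\delta$ for some $\eta$ of order $\sqrt{\log(n/\delta)/n}$. For a fixed coordinate $j$, $V_j=\sum_i H_{j,i}u_i\beta_i$ is a linear combination of the $l$-wise independent signs $\beta_i$ with weights $a_i:=H_{j,i}u_i$ satisfying $\|a\|_2=1/\sqrt{n}$ and $\|a\|_\infty\le 1/\sqrt{n}$. A Khintchine-type $L^l$-moment bound valid for $l$-wise independent Rademacher sums (an elementary combinatorial computation of $\E V_j^l$ suffices, since each monomial involves only $l$ of the $\beta_i$'s and hence behaves as under full independence) gives $(\E V_j^l)^{1/l}\le C\sqrt{l/n}$. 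Markov's inequality at level $\eta$ together with the choice $l=2\lceil\log(n/\delta)\rceil$ yields per-coordinate failure probability at most $\delta/n$, and a union bound over $j\in\{1,\ldots,n\}$ closes the step.

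For the projection step I would fix any deterministic $V$ with $\|V\|_2=1$ and $\|V\|_\infty\le\eta$, and set $T_i:=\sum_j\xi_{i,j}\eps_{i,j}V_j$. Using the independence of the $\eps$'s within a row and the uniformity of $\xi_i$ one checks that
\[
  \|f_2(u)\|_2^2=\frac{n}{dk}\sum_{i=1}^d T_i^2, \qquad \E T_i^2=\frac{k}{n},
\]
so the variables $Y_i:=(n/k)T_i^2$ are i.i.d.\ with mean $1$, and $\|f_2(u)\|_2^2=\frac{1}{d}\sum_{i=1}^d Y_i$. It suffices to show that this average concentrates around $1$ with deviation at most $2\eps/(1+\eps)^2$, which by elementary algebra implies the two-sided multiplicative bound on $\|f_2(u)\|_2$. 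Each $T_i$ is a sum of a random size-$k$ sample (picked by $\xi_i$) from the independent, bounded, mean-zero family $(\eps_j V_j)_j$, so the Bernstein-type inequality for sampled sums announced in the introduction yields sub-Gaussian tails for $T_i$ up to an explicit sub-exponential correction controlled by $\|V\|_\infty\le\eta$; the hypothesis on $k$ is precisely what forces this correction to be negligible. Squaring transports these tails into sub-exponential tails for $Y_i$ with unit variance proxy, and a standard Bernstein inequality for i.i.d.\ sums applied to $\frac{1}{d}\sum_{i=1}^d(Y_i-1)$ yields the required concentration with failure probability at most $\delta$ once $d$ meets the stated lower bound. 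A final union bound with the flattening event produces the announced $2\delta$.

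The main obstacle, and where most of the effort will go, is the careful bookkeeping of numerical constants through both Bernstein-type arguments together with the $l$-wise Khintchine bound, so that the explicit thresholds $d\ge 1.55(1+2\eps)^2\eps^{-2}\log(3/\delta)$ and $k\ge\max(\tfrac{8e}{3}\log(6d/\delta),20e)\log(2n/\delta)$ emerge cleanly. The factor $(1+2\eps)^2$ reflects how a Bernstein-scale deviation of order $2\eps/(1+\eps)^2$ for $\|f_2(u)\|_2^2$ has to be chased through the variance and bounded-term parameters of the i.i.d.\ Bernstein step in order to yield the advertised window $[(1+\eps)^{-1},1+\eps]$ for $\|f_2(u)\|_2$. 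Balancing the sub-Gaussian and sub-exponential regimes of the sampled-sum inequality so that the lower bound on $k$ takes the product form $\log(6d/\delta)\cdot\log(2n/\delta)$ is the other delicate calibration point.
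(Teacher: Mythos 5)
Your overall route is the same as the paper's (flatten with $HD$ via an $l$-wise Khintchine bound and a union bound; per-row Bernstein for sampled sums; then concentration of the sum of squares), but there is a genuine gap at the step where you pass from the tail of $T_i$ (equivalently $W_i=\sqrt{n/k}\,T_i$) to a ``standard Bernstein inequality for i.i.d.\ sums'' for $Y_i=(n/k)T_i^2$. The sampled-sum Bernstein bound gives $\Pr(|W_i|\ge s)\le 2\exp\bignaw{-s^2/(2+\tfrac23\sqrt{n/k}\,\alpha s)}$, which is sub-Gaussian only up to the crossover level $s_0\asymp\sqrt{k/n}/\alpha$ and merely sub-exponential beyond it. After squaring, $Y_i$ is sub-exponential only up to $s_0^2$; in the far tail it is sub-Weibull of exponent $1/2$, i.e.\ $\Pr(Y_i\ge u)\lesssim\exp(-c\sqrt{u}\,\sqrt{k/n}/\alpha)$, so its $p$-th central moments grow like $(p!)^2(\alpha^2 n/k)^p$ and the Bernstein moment condition $\E|Y_i-\E Y_i|^p\le\frac{p!}{2}\sigma^2M^{p-2}$ fails for every constant $M$. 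No hypothesis on $k$ fixes this, because the obstruction is the shape of the far tail, not its scale; hence the step ``squaring transports these tails into sub-exponential tails'' does not go through as stated.

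The missing idea is a truncation combined with a union bound over the $d$ rows: the paper sets $s_0=\tfrac32\sqrt{k/n}/\alpha$, applies the classical Bernstein inequality to the truncated variables $X_i=W_i^2\,\II_{\{|W_i|\le s_0\}}$ (whose moment condition is verified with explicit bounds on $\E W_i^4,\dots,\E W_i^{10}$ under $\tfrac{n}{k}\alpha^2\le\tfrac1{10}$, giving $\sigma_i^2=3.1$, $M=6$), and pays an extra additive term $2d\exp(-s_0^2/3)$ for the event that some row exceeds the truncation level. That term is exactly where the requirement $k\ge\tfrac{8e}{3}\log(6d/\delta)\log(2n/\delta)$ comes from, so the product form of the bound on $k$ is not mere bookkeeping but the quantitative expression of this truncation/union-bound mechanism, which your sketch does not identify. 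A secondary point: the lower tail is treated separately in the paper, using that $1-W_i^2\le 1$ a.s.\ to get a purely sub-Gaussian bound $\exp(-t^2/(6d))$; a symmetric two-sided Bernstein at your deviation level $2\eps/(1+\eps)^2$ would be lossier and would not recover the constant $1.55$ in the stated threshold for $d$.
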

\begin{theorem}[the $\ell_1$ case]
  \label{thm:l2l1}
  Assume $n$ is a power of $2$. For any constant $\kappa \in (0,1)$,
  assume %$\eps \le \frac34 \sqrt{\frac{\pi}{5}} (1-\kappa)^{-1}$,
  \[
    d \ge \frac{\pi + \sqrt{\pi/2} \frac83 \kappa \eps}{\kappa^2 \eps^2} \log(2/\delta)
      \quad \text{and} \quad
    k \ge \max\naw{\frac{9 \pi e}{4(1-\kappa)^2 \eps^2}, 20e} \log(2n/\delta).
  \]
Then the random linear transformation $f_1$ as defined in~(\ref{eq:f}) satisfies
  \[
 % \begin{equation} \label{eq:almost-isometry-l2l1}
    \forall_{u \in \ell_2^n} \ \Pr\bignaw{(1-\eps) \norm{u}_2 \le \sqrt{\pi/2} \norm{f_1(u)}_1 \le (1+\eps) \norm{u}_2} \ge 1 - 2\delta
 \]  %\end{equation}
  provided $k \le n$.
\end{theorem}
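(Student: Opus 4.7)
The plan is to decompose the randomness into three layers --- the diagonal signs $\beta_j$, the sparse sampling masks $\xi_{i,j}$ together with signs $\eps_{i,j}$, and the average over the $d$ independent rows --- and to analyze them in this order, using the two specialized tools advertised in the introduction. By homogeneity I may assume $\norm{u}_2 = 1$. First I set $V := H D u$; since $H D$ is orthogonal, $\norm{V}_2 = 1$. Each coordinate $V_j$ is a linear combination of the $\beta$'s with $\ell_2$-coefficient norm $1/\sqrt n$, so an $l$-th moment Khintchine-type bound --- valid under $l$-wise independence with $l = 2\lceil \log(n/\delta)\rceil$ --- combined with a union bound over the $n$ coordinates gives
\[
  \norm{V}_\infty \le C_1 \sqrt{\log(n/\delta)/n}
\]
with probability at least $1-\delta$; I condition on this event.

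Next, for each $i$, define $Y_i := \sqrt{n/k}\sum_j \xi_{i,j}\eps_{i,j} V_j$ so that $\norm{f_1(u)}_1 = d^{-1}\sum_i |Y_i|$. Given $V$, the $Y_i$ are i.i.d.\ with $\E Y_i = 0$, $\E Y_i^2 = \norm{V}_2^2 = 1$, and $|Y_i| \le \sqrt{nk}\,\norm{V}_\infty$. If the $Y_i$ were Gaussian one would have $\E|Y_i| = \sqrt{2/\pi}$, so the target identity $\sqrt{\pi/2}\,\E|Y_i| = 1$ would hold exactly. Applying the $L^1$ Berry--Esseen estimate for a sum of a random sample (Theorem~\ref{theorem:stein-approximation}) gives the first approximation
\[
  \bigl|\E|Y_i| - \textstyle\sqrt{2/\pi}\bigr| \le C_2 \sqrt{n/k}\,\norm{V}_\infty \le C_3 \sqrt{\log(n/\delta)/k},
\]
which, under the hypothesized lower bound on $k$, is at most $(1-\kappa)\eps\sqrt{2/\pi}$. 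In parallel, the bounded i.i.d.\ variables $|Y_i|$ admit a Bernstein/Bennett-type concentration of $d^{-1}\sum_i |Y_i|$ around its mean; the hypothesized lower bound on $d$ is calibrated so that this deviation is at most $\kappa\eps\sqrt{2/\pi}$ with probability $1-\delta$. Multiplying by $\sqrt{\pi/2}$ and combining the two inequalities produces $\sqrt{\pi/2}\,\norm{f_1(u)}_1 \in (1\pm\eps)$ on an event of total probability at least $1-2\delta$, as required; the free parameter $\kappa$ is precisely what apportions the $\eps$-budget between the normal approximation in Step 2 and the concentration in Step 3.

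The crux lies in Step 2. The summands $\xi_{i,j}\eps_{i,j}V_j$ are not independent --- the hard constraint $\sum_j \xi_{i,j} = k$ introduces negative dependence --- so classical Berry--Esseen is not directly applicable, which is precisely why Theorem~\ref{theorem:stein-approximation} is needed. Moreover, obtaining an $L^1$ (rather than Kolmogorov) bound with the sharp Gaussian constant $\sqrt{2/\pi}$ on the right-hand side, and with numerical constants tight enough to produce the announced $\tfrac{9 \pi e}{4(1-\kappa)^2\eps^2}$ factor in the lower bound on $k$, is the delicate point. The Hadamard randomization of Step 1 and the concentration of Step 3 are standard in spirit, but each still requires careful constant tracking --- in particular, the Bernstein step must be applied to nonnegative, bounded, variance-at-most-$1$ summands with the right form of the variance bound to yield exactly the prefactor $\pi + \sqrt{\pi/2}\,\tfrac{8}{3}\kappa\eps$ appearing in the hypothesis on $d$.
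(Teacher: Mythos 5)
Your overall architecture is the same as the paper's (Hadamard preconditioning, normal approximation of $\E|W_i|$, then Bernstein for the average over the $d$ independent rows, with $\kappa$ splitting the $\eps$-budget), and your Steps 1 and 2 are essentially the paper's Proposition~\ref{prop:hadamard-small-ell-infty} and Proposition~\ref{prop:normal-approx}, with the correct calibration of the lower bound on $k$. The gap is in Step 3. The only control you establish on the row variables beyond their second moment is the almost-sure bound $|Y_i|\le \sqrt{nk}\,\norm{V}_\infty$, which on the event of Step 1 is of order $\sqrt{k\log(n/\delta)}$ and grows with $k$ and $n$. A Bennett/Bernstein inequality for summands bounded by $b\sim\sqrt{k\log(n/\delta)}$ has deviation scale $M\sim b$, so asking for deviation $d\kappa\eps\sqrt{2/\pi}$ with failure probability $\delta$ forces $d\gtrsim \bignaw{1+\kappa\eps\sqrt{k\log(n/\delta)}}\kappa^{-2}\eps^{-2}\log(2/\delta)$, which is \emph{not} the stated hypothesis $d\ge \frac{\pi+\sqrt{\pi/2}\frac83\kappa\eps}{\kappa^2\eps^2}\log(2/\delta)$ (the latter is independent of $n$ and $k$, and under the stated $k$ the extra factor is at least of order $\kappa\log(n/\delta)$). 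So the claim that ``the hypothesized lower bound on $d$ is calibrated'' for this boundedness-based concentration is unsupported and, as stated, false; you acknowledge that the prefactor $\pi+\sqrt{\pi/2}\frac83\kappa\eps$ must come out of the Bernstein step, but you supply no mechanism that could produce a constant-size $M$.

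The missing ingredient is precisely the second specialized tool you cite but never use: the sparse Bernstein inequality, Theorem~\ref{theorem:sparse-bernstein}, applied within a single row. It yields the tail bound of Proposition~\ref{prop:tail-of-Wi}, namely a sub-Gaussian tail for $W_i$ up to a threshold and a sub-exponential tail with scale $\frac23 (n/k)^{1/2}\alpha$, which is a small numerical constant (at most $\frac{2}{3\sqrt{10}}$) because $k\ge 20e\log(2n/\delta)$ enforces condition~(\ref{ineq:nka}). Feeding these tails, together with the explicit low-moment bounds of Lemma~\ref{lemma:a-few-first-moments}, the elementary Lemma~\ref{lemma:central-moments}, and the estimate $\bigabs{\E|W_i|-\sqrt{2/\pi}}\le\frac32\alpha\sqrt{n/k}$ from Proposition~\ref{prop:normal-approx}, into the moment condition~(\ref{ineq:moments-like-exponential}) gives $\E\bigabs{|W_i|-\E|W_i|}^p\le\frac{p!}{2}\si_i^2M^{p-2}$ with $\si_i^2=1$ and $M=4/3$; only then does the classical Bernstein inequality (Theorem~\ref{theorem:classical-bernstein}) give $\Pr\bignaw{\bigabs{\norm{Pv}_1-\E\norm{Pv}_1}\ge d\kappa\eps\sqrt{2/\pi}}\le 2\exp\bignaw{-\frac{(2/\pi)d\kappa^2\eps^2}{2+\sqrt{2/\pi}\frac83\kappa\eps}}\le\delta$ under the stated hypothesis on $d$ (this is Proposition~\ref{prop:deviation-for-sum-of-abs}). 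In short: replace ``bounded i.i.d.\ variables'' by ``variables with constant-scale sub-exponential tails obtained from Theorem~\ref{theorem:sparse-bernstein} under~(\ref{ineq:nka})'', and carry out the moment bookkeeping; without that, Step 3 does not prove the theorem with the stated parameters.
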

A typical situation in which the results are applied is the one mentioned in the introduction: having a set $\mathcal{X}$ of $N$ points in $\ell_2^n$ ($n$ is a power of 2) we want to embed it into a space of (possibly much lower) dimension $d$ with distortion $1+\eps$. To this end we sample an embedding at random as specified in Theorem~\ref{thm:l2l2} or \ref{thm:l2l1}. Assuming we want the embedding to work with probability at least $1-p$, we take $\delta = p/N^2$ and apply the union bound over ${N \choose 2}$ vectors being the differences of pairs of points from $\mathcal{X}$ to obtain that the embedding fails to have distortion $1+\eps$ with probability at most ${N \choose 2} 2\delta < p$.

Let us now discuss some implementation aspects of the embeddings.
In the case of the embedding into $\ell_2$, Theorem~\ref{thm:l2l2} implies that for any set $\mathcal{X}$ of $N$ points in $\ell_2^n$, any distortion parameter $\eps \in (0,1)$ and any failure probability $p \in (0,1)$, the probability that a random transformation $f_2$ with the parameters 
\[
  d= \left\lceil 1.55 \frac{(1+2 \eps)^2}{\eps^2} \log\naw{\frac{3N^2}{p}} \right\rceil, \quad
%\quad \text{and} \quad
  k = \left\lceil \max\naw{7.25 \log\naw{\frac{6d N^2}{p}}, 55} \log\naw{\frac{2nN^2}{p}} \right\rceil
\]
embeds $\mathcal{X}$ into $\ell_2^d$ with distortion $1+\eps$ is at least $1-p$,
unless $k > n$. If indeed $k > n$, or even $k > n/3$ which means that the matrix $P$ has already poor sparsity, then one can use the construction of Achlioptas~\cite{achlioptas} instead. It provides a random embedding into $\ell_2^d$ with the target dimension $d$ similar to ours, roughly with constant $1$ instead of $1.55$. The embedding is given by a $d \times n$ matrix whose entries are independent random variables assuming values $1, 0, -1$ with respective probabilities $\frac16, \frac23, \frac16$. See~\cite{achlioptas} for details.

Therefore, in what follows, we assume $k \le n/3$. Sampling the random embedding $f_2$ is actually the matter of sampling the random variables $\beta_j$ ($j=1, \ldots, n$) and $\xi_{i,j} \eps_{i,j}$ ($i = 1,\ldots, d,  j=1, \ldots, n$). We shall describe a construction of these random variables on a sample space $\{0,1\}^r$ endowed with the uniform probability measure, thus $r$ will be the number of random bits that are used to sample $f_2$. First, due to the construction of Alon, Babai and Itai~\cite[Proposition 6.5]{alon}, the $l$-wise independent (here $l = 2 \lceil \log (N^2 n / p) \rceil$) symmetric $\pm 1$ random variables $\beta_1, \ldots, \beta_n$ can be constructed on the uniform sample space $\{0,1\}^{(\log_2 n + 1) l/2 + 1}$ and so $O\naw{(\log n) \log(N n)}$ random bits suffice. (Moreover, given an element of the sample space, their construction allows to compute the sequence $\beta_1, \ldots, \beta_n$ in time $O(l n \log n) = O\naw{n \bignaw{\log(N n)} \log n}$.) Next, for $i=1, \ldots, d$ the random variables $\xi_{i,1} \eps_{i,1}, \ldots, \xi_{i,n} \eps_{i,n}$ (which form the $i$th row of the matrix $P$) will be constructed by sampling a random subset $J \subset \{1, \ldots, n\}$ of cardinality $k$ and then sampling independently $k$ random signs. The random subset $J$ can be sampled according to the following algorithm:
\begin{algorithmic}
  \State $J \gets \emptyset$
  \While{$\#J < k$}
        \State
        using $\log_2 n$ random bits sample an index $j$ uniformly in $\{1, \ldots, n\}$
     \If{$j \notin J$}
       \State $J \gets J \cup \{j\}$
     \EndIf
  \EndWhile
\end{algorithmic}
The number of random bits used for sampling the $i$th row of $P$ is $k + T_i \log_2 n$, where $T_i$ is the number of iterations made by the {\bf while} loop. Note that $T_i$ is a sum of $k$ independent geometric random variables with subsequent success probabilities $1$, $\frac{n-1}{n}$, \ldots, $\frac{n-k+1}{n}$ (the success is sampling $j$ not yet contained in $J$). Since $k \le n/3$, a rough estimate gives $\E T_i \le \frac32 k$ and $\Var(T_i) \le \frac34 k$. Thus we can sample the matrix $P$ using $dk + T \log_2 n$ random bits, where $T = T_1 + \ldots + T_d$ and $T_i$'s are independent. Note that $\E T \le \frac32 d k$, $\Var(T) \le \frac34 d k$ and by Chebyshev's inequality, for $\lambda > 0$,
\[
  \Pr\naw{ T \ge \frac32 dk + \lambda \sqrt{\frac34 dk}} 
  \le \Pr\naw{ T \ge \E T + \lambda \sqrt{\Var(T)}} \le \lambda^{-2}.
\]
Hence with probability close to $1$, $T$ does not exceed some constant times $dk$. (Actually one can derive much stronger exponential tail estimate for $T$, but it is not essential here.) 

Overall, the whole construction uses $O\bignaw{(\log n)\log (N n) + d k \log n}$ random bits to sample $f_2$. Assuming the failure probability $p$ is fixed and $\log n = O(\log N)$ and $\log(1/\eps) = O(\log N)$, we have $d = O(\eps^{-2} \log N)$, $k = O\naw{(\log N)^2}$ and the number of random bits used is $O\naw{\eps^{-2} (\log N)^3 \log n}$. The time complexity of applying $f_2$ to a single point is $O(dk + n \log n) = O\naw{\eps^{-2} (\log N)^3 + n \log n}$, where $n \log n$ term is due to using the Fast Fourier Transform while applying the Walsh-Hadamard transform.

The construction of the embedding into $\ell_1$ is similar, since the random transformation $f_1$ has the same structure as $f_2$. According to Theorem~\ref{thm:l2l1}, for any $\kappa \in (0,1)$ and
\[
  d = \left\lceil \frac{3.15 + 3.4 \kappa \eps}{\kappa^2 \eps^2} \log\naw{\frac{2N^2}{p}} \right\rceil,
\quad
  k = \left\lceil \max\naw{\frac{19.3}{(1-\kappa^2) \eps^2}, 55} \log\naw{\frac{2nN^2}{p}} \right\rceil,
\]
the probability that $f_1$ embeds a given $N$-point subset of $\ell_2^n$ into $\ell_1^d$ with distortion $1+\eps$ is at least $p$, provided $k \le n$.
If $k \le n/3$, we can proceed with the same algorithm of sampling the matrix $P$. In such case, the total number of random bits used to sample $f_1$ is $O\bignaw{\log n (\log (N n)) + d k \log n}$ which is $O\naw{\eps^{-4} (\log N)^2 \log n}$ under the assumption that $p = \Theta(1)$, $\log n = O(\log N)$ and $\log(1/\eps) = O(\log N)$. If $k$ is between $n/3$ and $n$, then the sparsity of the matrix $P$ is poor, therefore we shall take $k = n$ and possibly increase $\kappa$ to reduce the target dimension $d$. For these new parameters we sample $f_1$; this time $P$ is just a matrix of independent random signs.
Finally, if $k > n$ for all $\kappa \in (0,1)$ then assuming $p = \Theta(1)$ we have $\log N = \Omega(\eps^2 n)$ or $\eps = O(\sqrt{\frac{\log n}{n}})$. Hence $d = \Theta(\eps^{-2} \log N) = \Omega(n)$ which means that the reduction of the dimension would be at most proportional (if any).

\section{Proofs}

The proofs of Theorems~\ref{thm:l2l2} and~\ref{thm:l2l1} consist of four steps, which we outline below:
\begin{enumerate}
  \item We show that for any unit vector $u \in \ell_2^n$, the random vector $V = H D u$ has typically the $\ell_\infty$ norm less than $C \sqrt{\log(n /\delta)}/\sqrt{n}$.
  \item If a unit vector $v \in \ell_2^n$ has small $\ell_\infty$ norm, then each coordinate of the random vector $W = (W_1, \ldots, W_d) = P v$, which is distributed as the sum $\sqrt{n/k} \sum_{j=1}^n \xi_j \eps_j v_j$, is well concentrated. More precisely, in the case of embedding into $\ell_2$ we shall show that $W_i^2$ is tightly concentrated around its mean $\E W_i^2$. In the case of embedding into $\ell_1$, we show concentration of $|W_i|$ around $\E|W_i|$. In both cases we use a version of Bernstein inequality for a sum of a random sample from a family of independent random variables (Theorem~\ref{theorem:sparse-bernstein}).
  \item In the $\ell_2$ case we note that $\E W_i^2$ (where $W = (W_1, \ldots, W_d) = Pv$) depends only on the length of $v$, and if $\norm{v}_2 =1$, then $\E W_i^2 = 1$. Since it is no longer true for $\E|W_i|$, in the $\ell_1$ case we shall use a normal approximation of the distribution of $W_i$ (Theorem~\ref{theorem:stein-approximation}) to show that $\E |W_i|$ is close to $\sqrt{2/\pi}$.
  \item If a random vector $W$ has all its coordinates well concentrated around a certain value then $\frac{1}{d} \norm{W}_2^2$ or $\frac{1}{d} \norm{W}_1$ is well concentrated.
\end{enumerate}
%The step 1 is common for the dimension reduction into both $\ell_2$ and $\ell_1$ space while the steps 2 and 3 are somewhat differ. In particular, in step 2 for the embedding into $\ell_2$ we shall actually need a concentration of squares of coordinates of $W$, that is $W_i^2$, around $\E W_i^2$. This will be simpler than what we shall need in the $\ell_1$ case which is the concentration of $|W_i|$ around $\E|W_i|$
In the subsequent sections we elaborate on each of these steps in detail.

\subsection{Random signs and the Walsh-Hadamard transform}

Assume $u \in \ell_2^n$ is a unit vector and let $V = (V_1, \ldots, V_n) = H D u$. Since $H \colon \ell_2^n \to \ell_2^n$ is an isometry, $\norm{V}_2 = 1$ a.s. Also, $H$ has all entries $\pm 1/\sqrt{n}$ therefore $V_i = \frac{1}{\sqrt{n}} \sum_{j=1}^n \beta_j x_j,$
with $x_j = u_j$ or $x_j = -u_j$ depending on $i$, and in particular $\sum_{j=1}^n x_j^2 = 1$.
\begin{lemma}
  If $\sum_{j=1}^n x_j^2 = 1$ and $S = \sum_{j=1}^n \beta_j x_j$, then
  \[
    \Pr\naw{ |S| \ge \sqrt{2 e \log(2n/\delta)} } \le \delta/n.
  \]
\end{lemma}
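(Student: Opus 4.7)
The plan is to use the moment method at the maximal level allowed by the hypothesis, namely $l := 2\lceil \log(n/\delta)\rceil$. The key observation is that since $l$ is even and the joint law of any $l$ of the $\beta_j$'s agrees with that of $l$ independent Rademacher variables, the expansion
\[
  \E S^l = \sum_{j_1,\ldots,j_l=1}^n x_{j_1}\cdots x_{j_l}\, \E[\beta_{j_1}\cdots\beta_{j_l}]
\]
involves only $l$-tuples, so $\E S^l$ equals what it would be for fully independent Rademacher variables. The $l$-wise independence assumption is therefore exactly enough to compute this moment.

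Next I would bound the moment by comparison with a Gaussian. Only those monomials in which every $\beta_j$ appears with an even multiplicity contribute, and each such factor $\E \beta_j^{2k} = 1$ is dominated by the Gaussian moment $(2k-1)!! \ge 1$. Writing $\tilde S = \sum_j Z_j x_j$ with $Z_j$ i.i.d.\ standard Gaussians, one has $\tilde S \sim \mathcal{N}(0,1)$ (since $\sum x_j^2 = 1$), and a term-by-term comparison of the two expansions gives
\[
  \E S^l \le \E \tilde S^l = (l-1)!!.
\]
A routine Stirling estimate yields $(l-1)!! \le \sqrt{2}\,(l/e)^{l/2}$ for every even $l \ge 2$.

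Markov's inequality then gives
\[
  \Pr(|S| \ge t) \le \frac{\E S^l}{t^l} \le \sqrt{2}\left(\frac{l}{e t^2}\right)^{l/2}.
\]
Substituting $t = \sqrt{2e\log(2n/\delta)}$ and using $l \le 2\log(n/\delta) + 2$, the bracket becomes essentially $1/e^2$, so the right-hand side is of order $e^{-l}$, which is bounded by $(\delta/n)^2$. This leaves comfortable slack below the required $\delta/n$.

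I do not expect a serious obstacle: the only care required is in tracking the numerical constants through the Stirling step and absorbing the $+2$ coming from the ceiling in the definition of $l$, but the constant $\sqrt{2e}$ in the claimed bound leaves ample room. The degenerate regime $l > n$ is handled automatically, since in that case the $\beta_j$'s are fully independent by definition and the same computation applies verbatim.
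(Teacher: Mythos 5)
Your proposal is correct and follows essentially the same route as the paper: both exploit that $l$-wise independence makes $\E S^l$ coincide with the fully independent Rademacher case and then apply Markov's inequality at the moment of order $l=2\lceil\log(n/\delta)\rceil$. The only difference is how the moment is bounded — the paper cites the Khintchine inequality with constant $\sqrt{p-1}$ (via Bonami's hypercontractive estimate) and then uses $\Pr\bigl(|S|\ge\sqrt{e}\,(\E|S|^l)^{1/l}\bigr)\le e^{-l/2}$, whereas you bound $\E S^l$ by the Gaussian moment $(l-1)!!\le\sqrt{2}\,(l/e)^{l/2}$ through a term-by-term comparison; your final numerical check does go through (the factor $\sqrt{2}$ and the $+2$ from the ceiling are absorbed, using $\delta<\tfrac12$ so that $\log(n/\delta)\ge\log 2$), so the argument is sound, if slightly more self-contained than the paper's.
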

\begin{proof}
%Now, we derive the bound for $\alpha$. Take any unit vector $x \in \R^n$ and apply the orthogonal transformation $\frac{1}{\sqrt{n}} H D$, where $H$ is the Walsh-Hadamard matrix (we assume $n$ is a power of $2$) and $D$ is a diagonal matrix having $\beta_1, \ldots, \beta_n$ on the diagonal. The latter is a sequence of $n$ symmetric $\pm 1$ Bernoulli random variables which are $k$-independent, for $k = 2 \lceil \log(1/\delta) + \log n \rceil$, i.e. any $k$ of them are independent random variables.  Denoting $y = \frac{1}{\sqrt{n}} H D x$, the distribution of each coordinate of $\sqrt{n} y$ coincides with the distribution of the random variable
Recall that $\beta_1, \ldots, \beta_n$ are $l$-independent random variables with $l = 2\lceil \log(n/\delta) \rceil$. For a sequence of (fully) independent Bernoulli random variables $\eps_j = \pm 1$
\[
  \E S^l = \E \Bignaw{\sum_{j=1}^n \eps_j x_j}^l
\]
(just expand the both sides, use linearity of expectation and note that each summand involves expectation of a product of at most $\min(l, n)$ distinct $\beta_j$'s or $\eps_j$'s). The classical Khintchine inequality states that for any $p \ge 2$,
\[
  \bigg(\E \Big| \sum \eps_j x_j \Big|^p \bigg)^{1/p} \le C_p \bigg(\sum x_j^2 \bigg)^{1/2}
\]
where $C_p$ is a constant depending on $p$ only. It follows e.g. from the classical hypercontractive estimates for Bernoulli random variables (see~\cite{bonami}) that the inequality holds with $C_p = \sqrt{p-1}$. Taking $p := l = 2 \lceil \log(n/\delta) \rceil$, we thus have
\[
  \bignaw{\E S^p}^{1/p} \le \sqrt{p-1} < \sqrt{2\log(n/\delta) + 1} < \sqrt{2\log(2n/\delta)}
\]
which combined with Chebyshev's inequality
\[
  \Pr\naw{|S| \ge \sqrt{e} \bignaw{\E |S|^p}^{1/p}} \le e^{-p/2} \le \delta/n
\]
finishes the proof.
\end{proof}
Taking the union bound over all coordinates of $V$ we immediately arrive with
\begin{prop}\label{prop:hadamard-small-ell-infty}
Let $u \in \ell_2^n$ be a unit vector and let $V = H D u$. Then
\[
  \Pr\naw{\norm{V}_\infty \ge \frac{\sqrt{2 e \log(2n/\delta)}}{\sqrt{n}}} \le \delta.
\]
\end{prop}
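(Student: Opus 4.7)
The plan is to derive the supremum bound directly from the preceding lemma by a union bound over the $n$ coordinates of $V$. Since the lemma already handles the single-coordinate concentration with a Khintchine/hypercontractivity estimate, nothing essentially new needs to happen here; the role of this proposition is only to package the lemma in the form in which it will be used later.

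My first step is to put each coordinate $V_i$ into the form required by the lemma. Since every entry of $H$ is $\pm 1/\sqrt{n}$, I can write $H_{i,j} = \sigma_{i,j}/\sqrt{n}$ with deterministic signs $\sigma_{i,j} \in \{-1,+1\}$, so that
\[
  V_i = \sum_{j=1}^n H_{i,j} \beta_j u_j = \frac{1}{\sqrt{n}} \sum_{j=1}^n \beta_j (\sigma_{i,j} u_j).
\]
Setting $x_j := \sigma_{i,j} u_j$ I have $\sum_j x_j^2 = \norm{u}_2^2 = 1$, so $\sqrt{n}\, V_i$ is exactly the sum $S = \sum_j \beta_j x_j$ treated by the lemma. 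The coefficients depend on $i$, but that is harmless because the lemma is applied for one fixed $i$ at a time.

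Applying the lemma then gives
\[
  \Pr\naw{|V_i| \ge \frac{\sqrt{2 e \log(2n/\delta)}}{\sqrt{n}}} \le \delta/n
\]
for every $i \in \{1, \ldots, n\}$, and a union bound over the $n$ coordinates produces the factor of $n$ that cancels the $1/n$ on the right-hand side, yielding the claimed bound $\delta$. The denominator $n$ in the lemma's conclusion was clearly calibrated for precisely this purpose.

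There is no real obstacle: all the actual work—the replacement of the $l$-wise independent $\beta_j$'s by fully independent Bernoulli variables in the $l$-th moment, and the hypercontractive Khintchine bound with constant $\sqrt{p-1}$—has already been done inside the lemma. The only point to be careful about is that the signs $\sigma_{i,j}$ from the Hadamard matrix can be absorbed deterministically into the $x_j$'s without changing $\sum_j x_j^2$, which is immediate here.
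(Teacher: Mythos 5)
Your argument is exactly the paper's: the paper also notes that each coordinate satisfies $V_i = \frac{1}{\sqrt{n}}\sum_j \beta_j x_j$ with $\sum_j x_j^2 = 1$ (the Hadamard signs absorbed into the $x_j$'s), applies the lemma to each coordinate, and concludes by a union bound over the $n$ coordinates. Your proposal is correct and matches that proof.
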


\subsection{Bernstein inequality for a random sample from independent r.v.'s}

In what follows, let $Y_1, Y_2, \ldots, Y_n$ be independent random variables with $\E Y_i = 0$. We assume all moments of $Y_i$'s are finite and for some constants $M>0$ and $\si_i^2 > 0$,
\begin{equation}
  \label{ineq:moments-like-exponential}
  \E |Y_i|^p \le \frac{p!}{2} \si_i^2 M^{p-2}, \quad \text{for any integer $p \ge 2$.}
\end{equation}
Put $\si^2 = \sum_{i=1}^n \si_i^2$. The theorem below is the classical inequality of Bernstein.
\begin{theorem}
  \label{theorem:classical-bernstein}
   Let $S = Y_1 + \ldots + Y_n$. Then for all $s>0$,
  \[
    \Pr(S \ge s) \le \exp\naw{- \frac{s^2}{2 \si^2 + 2 M s}}
     \qquad \text{and} \qquad
    \Pr(S \le -s) \le \exp\naw{- \frac{s^2}{2 \si^2 + 2 M s}}.
  \]
\end{theorem}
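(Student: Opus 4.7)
The plan is to follow the standard Chernoff/Markov template: for $t \in (0, 1/M)$ we have
\[
  \Pr(S \ge s) \le e^{-ts} \, \E e^{tS} = e^{-ts} \prod_{i=1}^n \E e^{tY_i},
\]
by Markov's inequality and independence, and then we optimize over $t$. The first task is therefore to obtain a clean bound on each moment generating function $\E e^{tY_i}$. Using $\E Y_i = 0$ and expanding in Taylor series, the hypothesis~(\ref{ineq:moments-like-exponential}) plugs in exactly so that the resulting sum becomes a geometric series in $tM$:
\[
  \E e^{tY_i} = 1 + \sum_{p \ge 2} \frac{t^p}{p!} \E Y_i^p
  \le 1 + \frac{\si_i^2 t^2}{2} \sum_{p \ge 2} (tM)^{p-2}
  = 1 + \frac{\si_i^2 t^2}{2(1-tM)}.
\]
Using $1 + x \le e^x$ and multiplying, one arrives at $\E e^{tS} \le \exp\bignaw{ \frac{\si^2 t^2}{2(1-tM)}}$, which is the key estimate.

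Next I would substitute this back into the Markov bound and optimize. The natural choice $t = s/(\si^2 + Ms)$ lies in $(0, 1/M)$ and, after a short algebraic simplification (one notices that $1 - tM = \si^2/(\si^2 + Ms)$, which collapses the expression), gives exactly
\[
  \Pr(S \ge s) \le \exp\naw{-\frac{s^2}{2\si^2 + 2Ms}}.
\]
For the lower tail I would simply apply the upper tail bound to the random variables $-Y_1, \ldots, -Y_n$; they are independent, centered, and $\E |{-Y_i}|^p = \E |Y_i|^p$, so they satisfy~(\ref{ineq:moments-like-exponential}) with the same constants $M$ and $\si_i^2$, yielding the symmetric estimate.

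There is no real obstacle here — the argument is entirely classical and the shape of the moment hypothesis~(\ref{ineq:moments-like-exponential}) is tailored precisely so that the Taylor series of $\E e^{tY_i}$ sums in closed form. The only place where any care is needed is (i) checking that the chosen $t$ satisfies $tM < 1$ so that the geometric series converges, and (ii) performing the algebra that reduces the exponent to $-s^2/(2\si^2 + 2Ms)$.
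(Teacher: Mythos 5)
Your proof is correct and is exactly the standard Chernoff--MGF argument the paper relies on: the paper states Theorem~\ref{theorem:classical-bernstein} without proof as classical, and its proof of Theorem~\ref{theorem:sparse-bernstein} follows the same template, deriving the same bound $\E e^{tY_i} \le 1 + \si_i^2 t^2/(2(1-M|t|))$ as in~(\ref{ineq:est-laplace-transform-1}) and optimizing with the analogous choice $t = s/(\frac{k}{n}\si^2 + Ms)$. Your choice $t = s/(\si^2+Ms)$, the check $tM<1$, and the reduction to $-Y_i$ for the lower tail are all in order, so nothing is missing.
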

We shall also need a variant of the Bernstein inequality for a sum of a random sample of $k$ out of $n$ random variables $Y_1, \ldots, Y_n$.
\begin{theorem}
  \label{theorem:sparse-bernstein}
  Let $S = \sum_{i=1}^n \xi_i Y_i$ with $Y_i$'s
  satisfying~(\ref{ineq:moments-like-exponential}) and set
  $\si^2 = \sum_{i=1}^n \si_i^2$. Then
%\[ %%%% is it really needed?
%  \E e^{t S} \le \exp\naw{ \frac{k}{n} \frac{\si^2 t^2}{2(1 - M |t|)}},
%  \quad \text{for $|t| < 1/M$}
%\]
%and
for all $s > 0$,
\[
  \Pr(S \ge s) \le \exp\naw{- \frac{s^2}{2 \frac{k}{n} \si^2 + 2 M s}}
   \qquad \text{and} \qquad
  \Pr(S \le -s) \le \exp\naw{- \frac{s^2}{2 \frac{k}{n} \si^2 + 2 M s}}.
\]
(Recall, $\Pr(\xi_i = 1) = k/n$.)
%In particular, for $s \in [0, s_0]$, where $s_0 = \frac{k}{n}\frac{1}{\alpha M}$, we have
%\[
%  \Pr(|S| \ge s) \le 2 \exp \naw{- \frac{s^2}{4\frac{k}{n}}}.
%\]
%or equivalently, for $u \in [0, \sqrt{\frac{k}{n}} \frac{1}{\alpha M}]$,
%\[
%  \Pr(\sqrt{\frac{n}{k}} |S| \ge u) \le 2 \exp \naw{-\frac{u^2}{4}}.
%\]
%Also note that $\E S^2 = \frac{k}{n}$.
\end{theorem}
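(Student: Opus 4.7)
My plan is to follow the template of the classical Bernstein proof (Theorem~\ref{theorem:classical-bernstein}), but instead of invoking full independence of the summands I will combine the mutual independence of the $Y_i$'s with the \emph{negative correlation} of the sampling indicators $\xi_i$ to control the moment generating function of $S$.

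The starting point is the identity $e^{t\xi_i Y_i} = 1 + \xi_i(e^{tY_i}-1)$, valid because $\xi_i \in \{0,1\}$. Expanding the product $\prod_i(1 + \xi_i(e^{tY_i}-1))$ over subsets $A \subset \{1,\ldots,n\}$ and taking expectation -- first using the independence of $(\xi_i)$ from $(Y_i)$, then the mutual independence of the $Y_i$'s -- I would obtain
\[
\E e^{tS} = \sum_{A\subset\{1,\ldots,n\}} \E\!\Bigl[\prod_{i\in A}\xi_i\Bigr] \prod_{i\in A}\bigl(\E e^{tY_i}-1\bigr).
\]

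The two inputs I now need are: (i) the hypergeometric identity
$\E\prod_{i\in A}\xi_i = \binom{n-|A|}{k-|A|}\big/\binom{n}{k} = \prod_{j=0}^{|A|-1}\tfrac{k-j}{n-j} \le (k/n)^{|A|}$,
which is where sampling \emph{without} replacement actually helps us; and (ii) the fact that $\E e^{tY_i}-1 \ge 0$ by Jensen's inequality, so that termwise upper-bounding the factors in the sum is legitimate. Refactoring the resulting expression into a product yields
\[
\E e^{tS} \le \prod_i\Bigl(1 + \tfrac{k}{n}\bigl(\E e^{tY_i}-1\bigr)\Bigr) \le \exp\!\Bigl(\tfrac{k}{n}\sum_i(\E e^{tY_i}-1)\Bigr),
\]
using the elementary $1+x\le e^x$.

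For each $i$, the Bernstein moment hypothesis~(\ref{ineq:moments-like-exponential}), together with $\E Y_i = 0$ and the Taylor expansion of the exponential, yields the standard bound $\E e^{tY_i}-1 \le \tfrac{\si_i^2 t^2}{2(1-tM)}$ whenever $0 < t < 1/M$. Summing over $i$ and inserting this into the MGF estimate gives
\[
\E e^{tS} \le \exp\!\Bigl(\tfrac{(k/n)\si^2 t^2}{2(1-tM)}\Bigr),
\]
which is exactly the classical Bernstein MGF bound with $\si^2$ replaced by $(k/n)\si^2$. Optimising over $t \in (0,1/M)$ precisely as in Theorem~\ref{theorem:classical-bernstein} produces the upper tail, and the lower tail follows by applying the same argument to $-Y_i$, which also satisfies~(\ref{ineq:moments-like-exponential}) with the same parameters.

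The only step that is not completely mechanical is the hypergeometric inequality $\E\prod_{i\in A}\xi_i\le(k/n)^{|A|}$: this is where sampling without replacement ``pays for itself'', since the analogous inequality would fail for sampling with replacement. Together with the sign observation $\E e^{tY_i}-1\ge 0$ (which justifies the termwise majorisation), this is what lets the remainder of the argument reduce to the classical Bernstein derivation with the single improvement of an extra factor $k/n$ on the variance.
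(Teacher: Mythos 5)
Your proposal is correct and follows essentially the same route as the paper: both arguments bound $\E e^{tS}$ by combining the identity $e^{t\xi_i Y_i}=1+\xi_i(e^{tY_i}-1)$ (the paper phrases this as conditioning on $\sigma(\xi_1,\ldots,\xi_n)$) with the hypergeometric bound $\E\prod_{i\in A}\xi_i\le (k/n)^{\#A}$ of Lemma~\ref{lemma:neg-correlation}, then insert the standard Bernstein estimate $\E e^{tY_i}\le 1+\frac{\sigma_i^2t^2}{2(1-M|t|)}$ and optimise over $t$ exactly as in Theorem~\ref{theorem:classical-bernstein}. Your explicit remark that $\E e^{tY_i}-1\ge 0$ justifies the termwise majorisation is the same point the paper handles by first replacing $\E e^{tY_i}$ with its nonnegative Bernstein bound before invoking the lemma.
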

In the proof of Theorem~{theorem:sparse-bernstein} we will need a simple
\begin{lemma}\label{lemma:neg-correlation}
%Let $\xi_1, \xi_2, \ldots, \xi_n$ are random variables as above.
For any $A \subseteq \{1, \ldots, n\}$,
\[
  \E \naw{\prod_{i \in A} \xi_i} \le \prod_{i \in A} \E \xi_i = \naw{\frac{k}{n}}^{\# A}.
\]
\end{lemma}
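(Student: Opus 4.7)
The statement says the indicator variables $\xi_i$ of a uniform random $k$-subset of $\{1,\ldots,n\}$ are negatively associated in the simplest possible sense: the probability that $A$ lies entirely inside the sampled set is at most the corresponding product of marginals $(k/n)^{\#A}$.

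My plan is to first observe that $\prod_{i \in A} \xi_i$ is the $0/1$ indicator of the event $\{A \subseteq J\}$, where $J = \{j : \xi_j = 1\}$ is the random $k$-subset. If $\#A > k$ this event is empty and the bound is trivial, so assume $m := \#A \le k$. Then by a direct count,
\[
  \E \prod_{i \in A} \xi_i = \Pr(A \subseteq J) = \frac{\binom{n-m}{k-m}}{\binom{n}{k}} = \prod_{j=0}^{m-1} \frac{k-j}{n-j}.
\]

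The right-hand side we want to bound by $(k/n)^{m} = \prod_{j=0}^{m-1} \frac{k}{n}$, so it suffices to check the factor-wise inequality $\frac{k-j}{n-j} \le \frac{k}{n}$ for each $j = 0, 1, \ldots, m-1$. Cross-multiplying (all quantities are positive since $j < m \le k \le n$), this is equivalent to $n(k-j) \le k(n-j)$, i.e. $kj \le nj$, which holds because $k \le n$. Multiplying these factor-wise inequalities gives the claim.

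There is no real obstacle here; the lemma is essentially a one-line computation with binomial coefficients, and the only thing to be a little careful about is the degenerate case $\#A > k$ (where the left-hand side is zero). The identity $\E\xi_i = k/n$ noted parenthetically after Theorem~\ref{theorem:sparse-bernstein} is used to rewrite $\prod_{i \in A} \E \xi_i$ as $(k/n)^{\#A}$.
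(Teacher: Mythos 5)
Your proof is correct and is essentially the paper's own argument: the same binomial-coefficient computation $\frac{\binom{n-\#A}{k-\#A}}{\binom{n}{k}} = \prod_{j=0}^{\#A-1}\frac{k-j}{n-j}$ followed by the factor-wise bound $\frac{k-j}{n-j}\le\frac{k}{n}$, with the degenerate case $\#A>k$ handled separately just as in the paper. You merely spell out the cross-multiplication step that the paper leaves implicit.
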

\begin{proof}
  If $\# A > k$ then $\E \naw{\prod_{i \in A} \xi_i} = 0$, otherwise
\[ \begin{split}
  \E \naw{\prod_{i \in A} \xi_i} &= \Pr(\xi_i = 1 \text{ for each $i \in A$}) = \frac{{n - \# A \choose k - \# A}}{{n \choose k}} \\[1ex]
  &= \frac{k (k-1) \ldots (k-\# A + 1)}{n(n-1) \ldots (n-\# A + 1)} \le \naw{\frac{k}{n}}^{\# A}.
\end{split} \]
\end{proof}
\begin{proof}[Proof of Theorem~\ref{theorem:sparse-bernstein}]
Except for using Lemma~\ref{lemma:neg-correlation}, the proof follows a standard proof of Bernstein inequality. We present the proof below for the sake of completeness.

First, for any $i$ and $|t| < 1/M$,
\begin{equation}
  \label{ineq:est-laplace-transform-1}
  \E e^{t Y_i} = \sum_{k=0}^\infty \frac{t^k}{k!} \E Y_i^k \le
  1 + \frac{\si_i^2}{2} \sum_{k=2}^\infty |t|^k M^{k-2} \le
  1 + \frac{\si_i^2 t^2}{2(1-M |t|)}.
\end{equation}
To estimate $\E e^{t S}$, we condition on $\mathcal{F} = \si(\xi_i \colon i = 1, \ldots, n)$, use~(\ref{ineq:est-laplace-transform-1}) and Lemma~\ref{lemma:neg-correlation}:
\[ \begin{split}
  \E e^{tS} &= \E \prod_{i=1}^n \CE{e^{t \xi_i Y_i}}{\mathcal{F}} =
\E \prod_{i=1}^n (1 - \xi_i + \xi_i \E e^{t Y_i}) \\[1ex]
&\le \E \prod_{i=1}^n  \naw{1 + \xi_i \frac{\si_i^2 t^2}{2(1-M |t|)}}
\le \prod_{i=1}^n  \naw{1 + (\E \xi_i) \frac{\si_i^2 t^2}{2(1-M |t|)}} \\[1ex]
&\le \exp\naw{\frac{k}{n} \frac{\si^2 t^2}{2(1-M |t|)}}.
\end{split} \]
One obtains the inequality for the tail probability $\Pr(S \ge s)$ by taking $t = \frac{s}{\frac{k}{n} \si^2 + M s}$ and using Chebyshev's inequality. For the lower tail use $\Pr(S \le -s) = \Pr(-S \ge s)$.
\end{proof}
\begin{remark1}
  Since the random variables $\xi_1, \ldots, \xi_n$ are negatively associated (see~\cite{joag-dev-proschan}), the above result can be deduced (up to numerical constants) from a quite general comparison result of Shao~\cite{shao}. See also the paper of Hoeffding~\cite{hoeffding} for related results.
\end{remark1}
We use Theorem~\ref{theorem:sparse-bernstein} to obtain concentration for coordinates of the vector $W = P v$, i.e.
\[
  W_i = \sqrt{\frac{n}{k}} \sum_{j=1}^n v_j \xi_{i,j} \eps_{i,j} \quad \text{for $i=1, \ldots, d$.}
\]
\begin{prop}\label{prop:tail-of-Wi}
  Assume $v \in \ell_2^n$, $\norm{v}_2 = 1$, $\norm{v}_\infty \le \alpha$ and let $W = P v$. Then for $i = 1, \ldots, d$ and any $s > 0$,
\[
  \Pr(|W_i| \ge s) \le 2 \exp\naw{ - \frac{s^2}{2 + \frac{2}{3} (n/k)^{1/2} \alpha s}}.
\]
\end{prop}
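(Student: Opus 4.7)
The plan is to bring $W_i = \sqrt{n/k}\sum_{j=1}^n v_j \xi_{i,j}\eps_{i,j}$ into the form covered by Theorem~\ref{theorem:sparse-bernstein} and then calibrate the constants $\sigma_j^2$ and $M$ in the moment assumption~(\ref{ineq:moments-like-exponential}) so that the denominator comes out as stated. For a fixed row index $i$, I would set $Y_j := \sqrt{n/k}\,v_j\,\eps_{i,j}$. Since the Rademacher family $(\eps_{i,j})_j$ is independent of the sampling family $(\xi_{i,j})_j$, these $Y_j$'s are mean-zero, independent across $j$, and independent of $(\xi_{i,j})_j$, as required by Theorem~\ref{theorem:sparse-bernstein}. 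In fact $|Y_j| = \sqrt{n/k}\,|v_j|$ is deterministic, so $\E|Y_j|^p = (n/k)^{p/2}|v_j|^p$ exactly.

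The main step is verifying the Bernstein moment condition. Splitting off two factors of $|v_j|$ and bounding the remaining $p-2$ factors by $\alpha^{p-2}$ (valid since $\|v\|_\infty \le \alpha$) gives
\[
  \E|Y_j|^p \;\le\; (n/k)\,v_j^2 \cdot \bignaw{(n/k)^{1/2}\alpha}^{p-2}.
\]
I would then take $\sigma_j^2 := (n/k)v_j^2$ and $M := \tfrac{1}{3}(n/k)^{1/2}\alpha$. Comparing with $\tfrac{p!}{2}\sigma_j^2 M^{p-2}$, the condition reduces to the purely numerical inequality $3^{p-2} \le p!/2$ for every integer $p \ge 2$, which is immediate by induction (equality at $p=2$ and $p=3$, strict thereafter).

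Because $\|v\|_2 = 1$, summing yields $\sigma^2 = \sum_j \sigma_j^2 = n/k$, hence $\tfrac{k}{n}\sigma^2 = 1$ and $2Ms = \tfrac{2}{3}(n/k)^{1/2}\alpha\, s$. Theorem~\ref{theorem:sparse-bernstein} then delivers the one-sided bound $\Pr(W_i \ge s) \le \exp\bignaw{-s^2/(2 + \tfrac{2}{3}(n/k)^{1/2}\alpha s)}$. By symmetry of the Rademachers $\eps_{i,j}$, $-W_i$ has the same distribution as $W_i$, so the identical bound holds for $\Pr(W_i \le -s)$; a union bound supplies the factor of $2$. There is no serious obstacle here; the only point requiring a little care is the calibration $M = \tfrac{1}{3}(n/k)^{1/2}\alpha$ rather than the naive choice $(n/k)^{1/2}\alpha$, since it is the $p=3$ moment that pins down the sharp constant $\tfrac{2}{3}$ in the denominator.
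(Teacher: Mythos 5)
Your proof is correct and follows essentially the same route as the paper: the same choice $Y_j = (n/k)^{1/2}\eps_{i,j}v_j$, the same calibration $\si_j^2 = (n/k)v_j^2$ and $M = (n/k)^{1/2}\alpha/3$, and an application of Theorem~\ref{theorem:sparse-bernstein} with $\frac{k}{n}\si^2 = 1$. The only difference is that you spell out the verification of~(\ref{ineq:moments-like-exponential}) via the inequality $3^{p-2} \le p!/2$, which the paper leaves implicit.
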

\begin{proof}
Fix $i \in \{1, \ldots, d\}$ and set $Y_j = (n/k)^{1/2} \eps_{i,j} v_j$. The condition~(\ref{ineq:moments-like-exponential}) is satisfied with $\si^2_j = (n/k) v_j^2$ and $M = (n/k)^{1/2} \alpha/3$. Since $W_i = \sum_{j=1}^n \xi_{i,j} Y_j$ and $\si^2 = \sum_{j=1}^n \si^2_j = n/k$, Theorem~\ref{theorem:sparse-bernstein} provides the desired bound on $\Pr(|W_i| \ge s)$.
\end{proof}
For the sake of providing good numerical constants, beside the tail estimates established above we estimate a few first even moments of $W_i$ under the additional assumption
\begin{equation}
  \label{ineq:nka}
  \frac{n}{k} \A^2 \le r_0 := \frac{1}{10}.
\end{equation}
\begin{lemma}
  \label{lemma:a-few-first-moments}
  Under the assumptions of Proposition~\ref{prop:tail-of-Wi}, if~(\ref{ineq:nka}) holds then
\[
  \E W_i^4 \le 3.1, \quad
  \E W_i^6 \le 17, \quad
  \E W_i^8 \le 127, \quad
  \E W_i^{10} \le 1283.  %, \quad
%  \E S^{12} \le 16134.
\]
\end{lemma}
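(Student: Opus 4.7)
The plan is to compute the moments directly by expanding $W_i^{2m}$ and grouping terms according to the set partition that the multi-index induces. Concretely, since $W_i = (n/k)^{1/2} \sum_{j=1}^n v_j \xi_{i,j} \eps_{i,j}$,
\[
  \E W_i^{2m} = (n/k)^m \sum_{j_1, \ldots, j_{2m}=1}^n v_{j_1} \cdots v_{j_{2m}} \, \E \bignaw{\xi_{i,j_1} \cdots \xi_{i,j_{2m}}} \, \E \bignaw{\eps_{i,j_1} \cdots \eps_{i,j_{2m}}}.
\]
By independence and symmetry of the $\eps_{i,j}$'s, the second expectation vanishes unless every distinct index among $j_1, \ldots, j_{2m}$ appears an even number of times. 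Therefore each surviving tuple corresponds to a set partition $\pi$ of $\{1, \ldots, 2m\}$ into blocks of even sizes $2l_1, \ldots, 2l_{|\pi|}$ (with $\sum_b l_b = m$), together with a choice of distinct representatives $a_1, \ldots, a_{|\pi|} \in \{1, \ldots, n\}$ for the blocks.

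For a fixed partition $\pi$, the argument proceeds in two standard bounds. First, exactly as in the proof of Lemma~\ref{lemma:neg-correlation}, $\E \prod_b \xi_{i,a_b} \le (k/n)^{|\pi|}$. Second, dropping the distinctness constraint (which only enlarges the sum since all exponents are even) and using $\sum_a v_a^{2l} \le \alpha^{2l-2} \sum_a v_a^2 = \alpha^{2l-2}$ for every $l \ge 1$, I get
\[
  \sum_{a_1, \ldots, a_{|\pi|} \text{ distinct}} \prod_b v_{a_b}^{2l_b} \le \prod_b \alpha^{2l_b - 2} = \alpha^{2m - 2|\pi|}.
\]
Combining these with the prefactor $(n/k)^m$, the contribution of $\pi$ is at most $\bignaw{\A^2 n/k}^{m-|\pi|} = r^{m-|\pi|}$ where $r := \A^2 n/k \le 1/10$ by~(\ref{ineq:nka}). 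Letting $N_{m,j}$ denote the number of set partitions of $\{1, \ldots, 2m\}$ into exactly $j$ blocks, all of even size, I obtain
\[
  \E W_i^{2m} \le \sum_{j=1}^m N_{m,j} \, r^{m-j}.
\]

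It remains to enumerate $N_{m,j}$ for $m = 2, 3, 4, 5$ by classifying partitions by their block-size multiset. The leading term is always $N_{m,m} = (2m-1)!!$ (perfect matchings), giving the ``Gaussian'' contribution $3, 15, 105, 945$. The lower-order $N_{m,j}$ are counted by fixing the sizes of non-pair blocks and using standard binomial coefficients; for instance $N_{3,2} = \binom{6}{4} = 15$, $N_{4,3} = \binom{8}{4} \cdot 3 = 210$, $N_{4,2} = \tfrac12 \binom{8}{4} + \binom{8}{2} = 63$, and for $m=5$ one splits into the profiles $(2,2,2,4)$, $(2,2,6)$, $(2,4,4)$, $(2,8)$, $(4,6)$, $(10)$ and sums. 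Plugging $r = 1/10$ into the resulting polynomials gives the four numerical bounds. The only real obstacle is the bookkeeping for $m=5$, where several block-size profiles must be enumerated; otherwise the argument is a routine moment expansion combined with the negative correlation estimate of Lemma~\ref{lemma:neg-correlation}.
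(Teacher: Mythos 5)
Your proposal is correct and follows essentially the same route as the paper: expand $\E W_i^{2m}$, kill odd-power terms by symmetry of the $\eps_{i,j}$'s, bound $\E\prod\xi$ by $(k/n)^{|\pi|}$ via Lemma~\ref{lemma:neg-correlation}, and control the coordinate sums by $\sum_a v_a^{2l}\le\alpha^{2l-2}$, so each even-block partition contributes at most $r^{m-|\pi|}$ with $r=\alpha^2 n/k\le 1/10$. Your partition counts ($3+r$, $15+15r+r^2$, $105+210r+63r^2+r^3$, $945+3150r+2205r^2+255r^3+r^4$) reproduce the paper's case-by-case multinomial bookkeeping exactly and yield the stated numerical bounds.
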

\begin{proof}
  For $q=2,3,4,5$, write
  \[
    \E W_i^{2q} = \naw{\frac{n}{k}}^q \E \naw{\sum_{j=1}^n v_j \xi_j \eps_j}^{2q}
   \]
  and expand the right hand side. By the symmetry and independence of $\eps_j$'s, all the summands involving odd powers vanish. Using Lemma~\ref{lemma:neg-correlation} and the fact that for any integer $q_1 \ge 1$,
\[
  \sum_{j=1}^n v_j^{2q_1} \le \naw{\sum_{j=1}^n v_j^2} \norm{v}_\infty^{2(q_1-1)} \le \alpha^{2(q_1-1)},
\]
we obtain
  \[ \begin{split}
  \E W_i^4 &= \naw{\frac{n}{k}}^2 \E\naw{\sum_{j=1}^n v_j \xi_j \eps_j}^4 = \naw{\frac{n}{k}}^2 \naw{\sum_{j=1}^n v_j^4 \E \xi_j + 3 \sum_{j_1 \neq j_2} v_{j_1}^2 v_{j_2}^2 \E \naw{\xi_{j_1} \xi_{j_2}} }\\[1ex]
  &\le \naw{\frac{n}{k}}^2 \alpha^2 (k/n) + 3 \naw{\frac{n}{k}}^2 \naw{\sum_{j_1, j_2} v_{j_1}^2 v_{j_2}^2 } (k/n)^2 = \frac{n}{k} \alpha^2 + 3 \le r_0 + 3.
\end{split} \]
Similarly,
  \[ \begin{split}
  \E W_i^6 &\le \naw{\frac{n}{k}}^3 \Bigg( (k/n) \sum_j v_j^6 + {6 \choose 4\;  2} (k/n)^2 \sum_{j_1\neq j_2} v_{j_1}^4 v_{j_2}^2 + \frac{{6 \choose 2\;2\;2}}{3!} (k/n)^3 \sum_{\substack{j_1, j_2, j_3 \\ \text{(distinct)}}} v_{j_1}^2 v_{j_2}^2 v_{j_3}^2 \Bigg) \\[1ex]
  &\le
  (n/k)^2 \alpha^4 + 15 (n/k) \alpha^2 + 15 \le r_0^2 + 15r_0 + 15,
  \end{split} \]
and
  \[ \begin{split}
  \E W_i^8 &\le \naw{\frac{n}{k}}^4 \Bigg( (k/n) \sum_j v_j^8 + {8 \choose 6\;2} (k/n)^2 \sum_{j_1\neq j_2} v_{j_1}^6 v_{j_2}^2 + \frac{{8 \choose 4\;4}}{2!} (k/n)^2 \sum_{j_1\neq j_2} v_{j_1}^4 v_{j_2}^4 \\[1ex]
  &\qquad\qquad\quad+
\frac{{8 \choose 4\;2\;2}}{2!} (k/n)^3 \sum_{\substack{j_1, j_2, j_3 \\ \text{(distinct)}}} v_{j_1}^4 v_{j_2}^2 v_{j_3}^2 + \frac{{8 \choose 2\;2\;2\;2}}{4!} (k/n)^4 \sum_{\substack{j_1, j_2, j_3, j_4 \\ \text{(distinct)}}} v_{j_1}^2 v_{j_2}^2 v_{j_3}^2 v_{j_4}^2 \Bigg) \\[1ex]
  &\le (n/k)^3 \alpha^6 + 28 (n/k)^2 \alpha^4 + 35 (n/k)^2 \alpha^4 + 210 (n/k) \alpha^2 + 105 \\[1ex]
  &\le r_0^3+28r_0^2+35r_0^2+210r_0+105,
\end{split} \]
and
\[ \begin{split}
  \E W_i^{10} &\le r_0^4 + {10 \choose 8\;2} r_0^3 + {10 \choose 6\;4}r_0^3 + \frac{{10 \choose 6\; 2\; 2}}{2!} r_0^2 + \frac{{10 \choose 4\; 4\; 2}}{2!} r_0^2 + \frac{{10 \choose 4\; 2\; 2\; 2}}{3!} r_0 + \frac{{10 \choose 2\; 2\; 2\; 2\; 2}}{5!} \\[1ex]
  &= r_0^4 + 45r_0^3 + 210r_0^3 + 630r_0^2 + 1575r_0^2 + 3150r_0 + 945.
\end{split} \]
%and
%\[ \begin{split}
%  \E S^{12} &\le (k/n)^6 \naw{r_0^5 + {12 \choose 10} r_0^4 + {12 \choose 8}r_0^4  + \frac{{12 \choose 6}}{2} r_0^4 + %\frac{{12 \choose 8\; 2\; 2}}{2!} r_0^3 + \frac{{12 \choose 6\; 4\; 2}}{2!} r_0^3 + \frac{{12 \choose 4\; 4\; 4}}{3!} %r_0^3 + \frac{{12 \choose 4\; 4\; 2\; 2}}{2! 2!} r_0^2 + \frac{{12 \choose 4\; 2\; 2\; 2\; 2}}{4!} r_0 + \frac{{12 %\choose 2\; 2\; 2\; 2\; 2\; 2}}{6!}} \\[1ex]
%  &= (k/n)^6 (r_0^5 + 66 r_0^4 + 495 r_0^4 + 462 r_0^4 + 1485r_0^3+13860r_0^3+5775r_0^3 + 51975r_0^2 + 51975r_0 + %10395).
%\end{split} \]
%sprawdzic te wspolczynniki!
\end{proof}
We shall use the following simple lemma to handle the deviation of $W_i^2$ or $|W_i|$ from their means.
\begin{lemma}
\label{lemma:central-moments}
Assume $Y \ge 0$ a.s., $a \ge 0$ and $\Phi \colon \R_+ \to \R_+$ is non-decreasing. Then
\[
  \E \Phi(|Y - a|) \le \E \Phi(Y) + \Phi(a).
\]
\end{lemma}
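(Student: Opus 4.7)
The plan is to do a straightforward pointwise case analysis, split according to whether $Y \ge a$ or $Y < a$, and then take expectations.

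First, I would observe that on the event $\{Y \ge a\}$ we have $|Y - a| = Y - a \le Y$, so monotonicity of $\Phi$ gives $\Phi(|Y-a|) \le \Phi(Y)$. On the complementary event $\{Y < a\}$ we have $|Y - a| = a - Y \le a$, so again by monotonicity $\Phi(|Y-a|) \le \Phi(a)$. Combining these two cases yields the pointwise bound
\[
  \Phi(|Y-a|) \le \Phi(Y) \II_{\{Y \ge a\}} + \Phi(a) \II_{\{Y < a\}}.
\]

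Since $\Phi$ takes values in $\R_+$, both terms on the right are nonnegative, so dropping the indicators only makes things larger:
\[
  \Phi(|Y-a|) \le \Phi(Y) + \Phi(a).
\]
Taking expectations (and using that $\Phi(a)$ is a constant) yields the claim.

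The proof is purely pointwise and there is no real obstacle; the only subtlety is that one must use nonnegativity of $\Phi$ in order to drop the indicators and arrive at the stated additive (rather than maximum) form of the bound. Measurability of $\Phi(|Y-a|)$ and $\Phi(Y)$ is automatic provided $\Phi$ is measurable, which is guaranteed by monotonicity.
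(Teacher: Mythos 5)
Your proposal is correct and follows essentially the same argument as the paper: the case split on $\{Y \ge a\}$ versus $\{Y < a\}$, the two pointwise inequalities $\Phi(|Y-a|)\II_{\{Y \ge a\}} \le \Phi(Y)$ and $\Phi(|Y-a|)\II_{\{Y < a\}} \le \Phi(a)$, and summing before taking expectations. Your explicit remark that nonnegativity of $\Phi$ is what lets you pass to the additive bound is the same step the paper performs implicitly when it sums the two inequalities.
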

\begin{proof}
  Note, that $\Phi(|Y - a|) \II_{\{Y \ge a\}} \le \Phi(Y)$ a.s. and $\Phi(|Y-a|) \II_{\{Y < a\}} \le \Phi(a)$ a.s. Summing up both inequalities and taking the expectation concludes the proof.
%$$\Phi(|Y - \E Y| \II_{\{Y < \E Y\}}) \le \Phi(|-\E Y|) \Pr(Y \le \E Y) \le
%\Phi(\E Y) \le \E \Phi(Y),$$ by Jensen's inequality. Taking expected value concludes the proof.
\end{proof}

%In the lemma below, one should think of $S_i$ as $\sqrt{n/k} S$ from Theorem~\ref{theorem:sparse-bernstein}.
%\begin{corollary}
%  \label{corollary:est-of-Wi-for-l2}
%  Under the assumptions of Proposition~\ref{prop:tail-of-Wi} and~(\ref{ineq:nka}),

\subsection{$\E \norm{P v}_2^2 = d$ and $\E \norm{P v}_1 \approx d \sqrt{2/\pi}$ by normal approximation}

Let $v \in \ell_2^n$ be a unit vector and $W = P v$. Note that $W_1, \ldots, W_d$ are independent and
\[
  \E W_i^2 = \frac{n}{k} \E \bigg( \sum_{j=1}^n v_j \xi_j \eps_j \bigg)^2
   = \frac{n}{k} \sum_j {v_j}^2 \E \xi_j = 1,
\]
hence $\E \norm{W}_2^2 = d$.

The case of $\ell_1$-norm is more complicated. In principle, $\E |W_i|$ depends on $v = (v_1, \ldots, v_n)$. However, under the assumptions of small $\ell_\infty$-norm of $v$ and $k$ large, the distribution of $W_i$ is approximately Gaussian and thus $\E |W_i|$ can be approximated by $\sqrt{2/\pi}$. To this end we establish a slightly more general result which can be regarded as $L^1$ Berry-Esseen bound for a random sample from a family of independent random variables.
%The distance between the distributions is the Wasserstein distance (instead of usual Kolmogorov distance).
\begin{theorem}\label{theorem:stein-approximation}
  Let $n \ge 2$, $Y_i$, $i=1,2,\ldots, n$ be independent random variables and independent of $(\xi_1, \ldots, \xi_n)$, satisfying $\E Y_i = 0$,
$\sum_{i=1}^n \E Y_i^2 = n/k$ and having finite third moment. Denote $X_i = \xi_i Y_i$ and $S = \sum_{i=1}^n X_i$. Then the Wasserstein distance between the distribution of $S$ and the standard normal distribution
\[
  d_W(S, G) := \stackrel[h \in \textup{Lip}(1)]{}{\sup} |\E h(S) - \E h(G)| \le 3 \sum_{i=1}^n \E |X_i|^3,
\]
where $G \sim \mathcal{N}(0,1)$ and
$\textup{Lip}(1)$ is a set of 1-Lipschitz functions on $\R$. Moreover,
\begin{equation}
  \label{ineq:normal-approximation}
  \bigabs{\E|S| - \sqrt{2/\pi}} \le \frac32 \sum_{i=1}^n \E |X_i|^3 = \frac32 \frac{k}{n} \sum_{i=1}^n \E |Y_i|^3.
\end{equation}
\end{theorem}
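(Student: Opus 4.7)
The plan is to prove the bound using Stein's method, specifically via the zero-bias transformation of Goldstein and Reinert, which is well suited for sums with weakly dependent summands. For any $h \in \textup{Lip}(1)$, let $f = f_h$ solve Stein's equation $f'(w) - w f(w) = h(w) - \E h(G)$, with the standard bound $\|f''\|_\infty \le 2$. First I would check that $\E S = 0$ and $\E S^2 = 1$: these follow from $\sum \E Y_i^2 = n/k$ together with independence of the $Y_i$ and the fact that cross terms $\E[\xi_i \xi_j Y_i Y_j]$ vanish for $i \ne j$ because $\E Y_j = 0$. Then $d_W(S, G) = \sup_h |\E[f'(S) - S f(S)]|$, and constructing a random variable $S^*$ with the zero-bias distribution of $S$ (characterized by $\E[S f(S)] = \E f'(S^*)$) and exhibiting a coupling with small $\E|S - S^*|$ gives $d_W(S, G) \le \|f''\|_\infty \E|S - S^*|$.

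To identify $S^*$, I would condition on $\mathcal{F}_i = \sigma(\xi, \{Y_j\}_{j \ne i})$. Since $Y_i$ is independent of $\mathcal{F}_i$ with mean zero and finite third moment, the single-variable zero-bias identity applied to $Y_i$ gives
\[
  \E[Y_i f(S_{-i} + Y_i) \mid \mathcal{F}_i] = \E Y_i^2 \cdot \E_{Y_i^*} f'(S_{-i} + Y_i^*),
\]
where $S_{-i} = S - \xi_i Y_i$ and $Y_i^*$ has the zero-bias distribution of $Y_i$, taken independent of $\mathcal{F}_i$. Multiplying by $\xi_i$, summing over $i$, and using $\E X_i^2 = (k/n)\E Y_i^2$ together with $\sum_i \E X_i^2 = 1$, one reads off that $S^*$ may be represented by first drawing $I$ with $\Pr(I = i) = \E X_i^2$ and then setting $S^* = S_{-I} + Y_I^*$ under the conditioning $\xi_I = 1$.

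The coupling to the original $S$ is a swap on $\xi$. Draw $I$ independently of $(\xi, Y)$ with probabilities $\E X_i^2$. If $\xi_I = 1$ (so that $I$ already lies in the sampled set $J_\xi$), leave $\xi$ unchanged; otherwise, pick $J$ uniformly in $J_\xi$ and form $\xi'$ by exchanging $\xi_I$ and $\xi_J$. A short combinatorial count confirms that, conditional on $I = i$, $\xi'$ is uniform over $k$-subsets containing $i$, matching the law of $\xi$ given $\xi_i = 1$. Setting $S^* = \sum_{j \ne I,\, \xi'_j = 1} Y_j + Y_I^*$ one gets $|S - S^*| = |Y_I - Y_I^*|$ in the non-swap case and $|S - S^*| = |Y_J - Y_I^*|$ in the swap case. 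Using the explicit formula $\E|Y_i^*| = \E|Y_i|^3 / (2\E Y_i^2)$ for the zero-bias mean, together with H{\"o}lder's inequality $\E Y_i^2 \cdot \E|Y_i| \le \E|Y_i|^3$, one aims at $\E|S - S^*| \le \tfrac{3}{2} \sum_i \E|X_i|^3$, which combined with $\|f''\|_\infty \le 2$ yields $d_W(S, G) \le 3 \sum \E|X_i|^3$.

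The main obstacle is the swap case $\xi_I = 0$: the naive bound $\E|Y_J - Y_I^*| \le \E|Y_J| + \E|Y_I^*|$ averaged over the uniform $J \in \{1,\ldots,n\} \setminus \{I\}$ produces a term of the form $\frac{n-k}{n} \cdot \frac{1}{n-1} \sum_j \E|Y_j|$, which is not a priori dominated by $\sum \E|X_i|^3$ for each individual $i$. Overcoming this requires a careful accounting in which the swap and non-swap contributions are combined, using the normalization $\sum_i \E Y_i^2 = n/k$, the identity $\sum_i \E X_i^2 = 1$, and the cancellation between $(3/2)(k/n)\sum\E|X_i|^3$ from the non-swap case and $\frac{n-k}{2n}\sum\E|X_i|^3$ from the swap case (which sum to $\tfrac{3}{2}\sum\E|X_i|^3$ when $k=n$ and to less otherwise), leaving only a residual piece to be absorbed. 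Finally, for the moreover statement, the same coupling is used with $h(x) = |x|$: the specific Stein solution $f$ for this $h$ satisfies the sharper bound $\|f''\|_\infty \le 1$ (visible from the explicit solution, where $f''(0^{\pm}) = \pm 1$ and $f''(x) \to 0$ as $|x| \to \infty$), so that $|\E|S| - \E|G|| \le \|f''\|_\infty \E|S - S^*| \le \tfrac{3}{2} \sum \E|X_i|^3$.
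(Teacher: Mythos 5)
Your zero-bias route is, in substance, the paper's own argument in different packaging: the identification of $S^*$ by conditioning on $\mathcal F_i$ and the swap coupling via a uniform index in the sampled set correspond exactly to the paper's direct manipulation of the Stein equation with the functions $K_i(t)=\E\big(X_i(\II_{\{0\le t\le X_i\}}-\II_{\{X_i\le t<0\}})\big)$ and its comparison of $\mathcal L(S^{(i)}\mid\xi_i=0)$ with $\mathcal L(S^{(i)}\mid\xi_i=1)$ through a uniformly chosen $J\in\{j:\xi_j=1\}$. The swap coupling itself is correct (given $I=i$ the modified set is indeed uniform over $k$-subsets containing $i$), $\E S^2=1$ holds, and the bookkeeping $\E|Y_i^*|=\E|Y_i|^3/(2\E Y_i^2)$ reproduces the paper's $\int|t|K_i(t)\,dt=\tfrac12\E|X_i|^3$.

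However, the proof is not closed where you yourself flag "the main obstacle": the swap-case cross term. After the coupling you are left with
\[
\E|S-S^*|\;\le\;\tfrac12\sum_i\E|X_i|^3+\Big(\tfrac{k}{n}\Big)^2\sum_i\E Y_i^2\,\E|Y_i|
+\tfrac{k}{n}\Big(1-\tfrac{k}{n}\Big)\frac{1}{n-1}\sum_i\E Y_i^2\sum_{j\ne i}\E|Y_j|,
\]
and saying that a "residual piece" must "be absorbed" is not an argument. The missing idea, which the paper supplies, is to apply H\"older termwise, $\E Y_i^2\le(\E|Y_i|^3)^{2/3}$ and $\E|Y_j|\le(\E|Y_j|^3)^{1/3}$, and then the rearrangement (or AM--GM) bound $\sum_{i\ne j}(\E|Y_i|^3)^{2/3}(\E|Y_j|^3)^{1/3}\le(n-1)\sum_i\E|Y_i|^3$; with this the second and third terms together are at most $\big(\tfrac{k}{n}+1-\tfrac{k}{n}\big)\tfrac{k}{n}\sum_i\E|Y_i|^3=\sum_i\E|X_i|^3$, and the total is exactly $\tfrac32\sum_i\E|X_i|^3$ for every $k\le n$, not just "less than" it. Without this aggregation step the asserted bound is unproved. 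A second, smaller gap: for the moreover part you claim $\norm{f''}_\infty\le1$ for the Stein solution with $h(x)=|x|$ is "visible from the explicit solution"; it is not --- one must show $f''>0$ and $f'''<0$ on $(0,\infty)$ (the paper does this via the Szarek--Werner Gaussian tail estimates), since boundary values $f''(0^+)=1$ and decay at infinity alone do not rule out an interior maximum exceeding $1$.
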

In the literature there exist many related results, most of them concerning a more general problem called combinatorial central limit theorem. However, the author was not able to find a result which implies~(\ref{ineq:normal-approximation}) with a reasonable numerical constant. The combinatorial central limit theorem roughly states that $S_n = \sum_{i=1}^n Y_{i, \pi(i)}$ where $(X_{i,j})_{i, j \le n}$ is a matrix of independent random variables having finite third moments and $\pi$ is a random permutation of the set $\{1, 2, \ldots, n\}$, independent from $X_{i,j}$'s, after proper normalization has the distribution close to the standard normal. Taking the matrix $(X_{i,j})$, whose first $k$ rows are independent copies of the random vector $(Y_1, \ldots, Y_n)$ and the remaining entries are zeros, boils down to the problem from Theorem~\ref{theorem:stein-approximation}.

For example, the result of Ho and Chen~\cite[Theorem 3.1]{ho-chen} on the combinatorial CLT implies an estimate similar to~(\ref{ineq:normal-approximation}) but asymptotically weaker. Bolthausen~\cite{bolthausen} proved an optimal error bound but only in the case of deterministic $(X_{ij})$'s. Recently, Chen and Fang~\cite{chen-fang} proved the combinatorial CLT in its general form with the optimal rate of normal approximation error. They bound the Kolmogorov distance, which is generally more difficult to handle in comparison to the Wasserstein distance. However, for our purposes the Wasserstein distance is better suited and moreover it is possible to obtain an estimate with a reasonable numerical constant.

As in the results on the combinatorial CLT mentioned above, we employ Stein's method.
Except for a few twists, we basically follow the argument presented in~\cite[Section 2]{intro-to-stein-method} which illustrates the usage of Stein's method in the most basic setting of sums of independent random variables.

\begin{proof}
  It is enough to consider a 1-Lipschitz $h \colon \R \to \R$ which is piecewise continuously differentiable. As in~\cite[Section 2.1]{intro-to-stein-method}, consider the differential equation
  \begin{equation}
    \label{eqn:stein-equation}
    f'(x) - xf(x) = h(x) - \E h(G)
  \end{equation}
  whose solution is given by the formula
  \begin{equation}
    \label{eqn:stein-solution}
    f(x) = e^{x^2/2} \int_{-\infty}^x (h(t) - \E h(G)) e^{-t^2/2} \, dt.
  \end{equation}
  Note that $f$ is $C^1$ and $f'$ is piecewise continuously differentiable, so for any $a, b \in \R$, $|f'(a) - f'(b)| \le |a-b| \norm{f''}_\infty$.
  It turns out~\cite[Lemma 2.3]{intro-to-stein-method} that $\norm{f}_\infty \le  2\norm{h'}_\infty \le 2$, $\norm{f'}_\infty \le 4\norm{h'}_\infty \le 4$ and $\norm{f''}_\infty \le 2\norm{h'}_\infty \le 2$.

  Substituting $S$ for $x$ in~(\ref{eqn:stein-equation}) and taking the expectation we get
  \[
    \E h(S) - \E h(G) = \E\bignaw{f'(S) - S f(S)}.
  \]
  Set $S^{(i)} = S - X_i$ and define
  \[
    K_i(t) = \E \Bignaw{X_i \naw{ \II_{\{0 \le t \le X_i\}} - \II_{\{X_i \le t < 0\}}}}.
  \]
  Note that $K_i(t) \ge 0$ for all $t \in \R$,
    \begin{equation}
    \label{eq:K}
     \int_{-\infty}^\infty K_i(t)\,dt = \E X_i^2\qquad \text{and} \qquad \int_{-\infty}^\infty |t| K_i(t)\,dt = \frac12 \E |X_i|^3.
  \end{equation}
  Since $Y_i$ is mean-zero and independent of $\sigma(S^{(i)}, \xi_i)$, we have
  \[
    \E \naw{X_i f(S^{(i)})} = \E Y_i \E \naw{\xi_i f(S^{(i)})} = 0.
  \]
  Therefore
  \[ \begin{split}
    \E \bignaw{S f(S)} &= \sum_{i=1}^n \E \bignaw{X_i f(S)} = \sum_{i=1}^n \E \naw{X_i \naw{f(S) - f(S^{(i)})}} \\[1ex]
   &= \sum_{i=1}^n \E \naw{X_i \int_0^{X_i} f'(S^{(i)} + t)} \, dt \\[1ex]
   &= \sum_{i=1}^n \int_{-\infty}^\infty \E \bignaw{f'(S^{(i)} + t) X_i \naw{ \II_{\{0 \le t \le X_i\}} - \II_{\{X_i \le t < 0\}}}} \, dt \\[1ex]
  &= \sum_{i=1}^n \int_{-\infty}^\infty \Pr(\xi_i = 1) \CE{f'(S^{(i)} + t) Y_i \naw{ \II_{\{0 \le t \le Y_i\}} - \II_{\{Y_i \le t < 0\}}}}{\xi_i = 1} \, dt \\[1ex]
  &= \sum_{i=1}^n \int_{-\infty}^\infty \frac{k}{n} \CE{f'(S^{(i)} + t)}{\xi_i=1} \E \bignaw{Y_i \naw{ \II_{\{0 \le t \le Y_i\}} - \II_{\{Y_i \le t < 0\}}}} \, dt \\[1ex]
  &= \sum_{i=1}^n \int_{-\infty}^\infty \CE{f'(S^{(i)} + t)}{\xi_i=1} K_i(t) \, dt.
 \end{split} \]
Since $\sum_{i=1}^n \E X_i^2 = 1$, we have
\[
  \E f'(S) = \sum_{i=1}^n \int_{-\infty}^\infty \E f'(S) K_i(t) \, dt.
\]
Combining the two preceding identities we obtain
\begin{equation}
  \label{ineq:fSf}
 \begin{split}
  \E \bignaw{f'(S) - S f(S)} &= \sum_{i=1}^n \int_{-\infty}^\infty \naw{E f'(S) - \CE{f'(S^{(i)} + t)}{\xi_i = 1}} K_i(t) \, dt \\[1ex]
   &= \sum_{i=1}^n \int_{-\infty}^\infty \bigg( \Pr(\xi_i=1) \CE{f'(S) - f'(S^{(i)} + t)}{\xi_i = 1} \\[1ex]
   & \qquad + \Pr(\xi_i=0) \naw{\CE{f'(S)}{\xi_i=0} - \CE{f'(S^{(i)} + t)}{\xi_i = 1}} \bigg) K_i(t) \, dt \\[1ex]
   &= \sum_{i=1}^n \int_{-\infty}^\infty \naw{\frac{k}{n} \textup{I} + \naw{1-\frac{k}{n}} \textup{II}} K_i(t) \, dt.
  \end{split}
\end{equation}
Next, estimate $|\textup{I}|$ and $|\textup{II}|$:
\[ \begin{split}
  |\textup{I}| &\le \dCE{\abs{f'(S^{(i)} + t + (X_i - t)) - f'(S^{(i)} + t)}}{\xi_i=1} \\[1ex]
      &\le \dCE{\norm{f''}_\infty |X_i - t|}{\xi_i=1} \le \norm{f''}_\infty (\E|Y_i| + |t|),
\end{split} \]
and
\[ \begin{split}
  |\textup{II}| &= \abs{\CE{f'(S^{(i)})}{\xi_i=0} - \CE{f'(S^{(i)} + t)}{\xi_i = 1}} \\[1ex]
  &\le \abs{\CE{f'(S^{(i)})}{\xi_i=0} -  \CE{f'(S^{(i)})}{\xi_i = 1}} + \norm{f''}_\infty |t| \\[1ex]
  &= | \textup{II}_1 - \textup{II}_2 | + \norm{f''}_\infty |t|.
\end{split} \]
Define a random variable $J$ which is independent of $(Y_1, \ldots, Y_n)$ and given the vector $(\xi_1, \ldots, \xi_n)$, $J$ is uniformly distributed on $\{j \colon \xi_j = 1\}$. Note that $\mathcal{L}(S^{(i)} - X_J \big| \xi_i = 0) = \mathcal{L}(S^{(i)} \big| \xi_i = 1)$ (both are the distribution of a sum of a random sample of $k-1$ random variables out of $Y_1, \ldots, Y_{i-1}, Y_{i+1}, \ldots, Y_n$), thus $\CE{f'\naw{S^{(i)} - X_J}}{\xi_i=0} = \textup{II}_2$. Hence
\[ \begin{split}
  |\textup{II}_1 - \textup{II}_2| &\le \dCE{\abs{f'\naw{\bignaw{S^{(i)} - X_J} + X_J} - f'(S^{(i)} - X_J)}}{\xi_i=0} \\[1ex]
       &\le \norm{f''}_\infty \CE{|X_J|}{\xi_i=0}.
\end{split} \]
Since $\mathcal{L}(J \big| \xi_i = 0)$ is uniform on $\{1, \ldots, n\} \setminus \{i\}$ and $\xi_J = 1$ a.s.,
$\CE{|X_J|}{\xi_i=0} = \frac{1}{n-1} \sum_{j \neq i} \E |Y_j|$
and thus
\[
  |\textup{II}| \le \norm{f''}_\infty \naw{|t| + \frac{1}{n-1} \sum_{j \neq i} \E|Y_j|}.
\]
Plugging the bound on $\textup{I}$ and $\textup{II}$ into~(\ref{ineq:fSf}) and using~(\ref{eq:K}) and the identity $\E |X_i|^p = \frac{k}{n} \E |Y_i|^p$ (for $p=1,2,3$), we obtain
\begin{align*}
  \E & \bignaw{f'(S) - S f(S)} \\[1ex]
     &\le \norm{f''}_\infty \sum_{i=1}^n \int_{-\infty}^\infty \bigg(|t| + \frac{k}{n} \E|Y_i| + \naw{1-\frac{k}{n}} \frac{\sum_{j \neq i} \E |Y_j|}{n-1}\bigg) K_i(t) \, dt \\[1ex]
 &= \norm{f''}_\infty \sum_{i=1}^n \bigg(
 \frac12 \E |X_i|^3 + \naw{\frac{k}{n}}^2 \E|Y_i| \E Y_i^2 + \naw{1 - \frac{k}{n}} \frac{\sum_{j\neq i} \E |Y_j|}{n-1} \naw{\frac{k}{n} \E Y_i^2} \bigg) \\[1ex]
&\le \norm{f''}_\infty \bigg(\frac12 \sum_{i=1}^n \E |X_i|^3 + \naw{\frac{k}{n}}^2 \sum_{i=1}^n \E|Y_i|^3 \\
&\qquad\qquad\quad + \frac{k}{n} \naw{1-\frac{k}{n}} \frac{1}{n-1} \sum_{i=1}^n \sum_{j \neq i} (\E |Y_j|^3)^{1/3} (\E |Y_i|^3)^{2/3} \bigg),
\end{align*}
where the last inequality follows from the H\"{o}lder inequality. Now, by a standard rearrangement inequality we note that for any integer $s$,
\[
  \sum_{i = 1}^n (\E|Y_{i+s}|^3)^{1/3} (\E |Y_i|^3)^{2/3} \le \sum_{i = 1}^n (\E|Y_i|^3)^{1/3} (\E |Y_i|^3)^{2/3} = \sum_{i=1}^n \E|Y_i|^3
\]
(the index $i+s$ is taken modulo $n$ and we set $Y_0 = Y_n$). Hence,
\[
\sum_{i=1}^n \sum_{j \neq i} (\E |Y_j|^3)^{1/3} (\E |Y_i|^3)^{2/3} =
\sum_{s=1}^{n-1} \sum_{i=1}^n (\E|Y_{i+s}|^3)^{1/3} (\E |Y_i|^3)^{2/3}
\le (n-1) \sum_{i=1}^n \E|Y_i|^3.
\]
Finally,
\[
  \E \bignaw{f'(S) - S f(S)} \le \frac32 \norm{f''}_\infty \sum_i \E |X_i|^3.
\]
Together with the bound $\norm{f''}_\infty \le 2\norm{h'}_\infty$ it yields the estimate for $d_W(S, G)$. To bound $\bigabs{\E|S| - \sqrt{2/\pi}}$, we take an explicit solution to the equation~(\ref{eqn:stein-equation}) for $h(x) = |x|$:
\[
  f(x) = \begin{cases}
           1 - 2e^{x^2/2} \Phi(x) & \text{for $x \le 0$,} \\[1ex]
           2 e^{x^2/2} (1 - \Phi(x)) - 1 & \text{for $x > 0$,}
         \end{cases}
\]
where $\Phi(x) = \frac{1}{\sqrt{2\pi}} \int_{-\infty}^x e^{-t^2/2} \,dt$ is the normal distribution function.

We shall prove that $\norm{f''}_\infty = 1$.
Note that $f(x)$ is an odd function, thus we compute $f''(x)$ for $x > 0$ only:
\[
  f''(x) = 2e^{x^2/2} (1-\Phi(x)) (1+x^2) - x \sqrt{2/\pi}.
\]
Since $\lim_{x \to 0_+} f''(x) = 1$, it suffices to show $f''(x) > 0$ and $f'''(x) < 0$ for all $x>0$.
To this end, we use the estimates for the Gaussian tail proved in~\cite{szarek-werner}: for all $x>-1$,
\begin{equation}
  \label{ineq:szarek-werner}
  \sqrt{2/\pi} \frac{1}{x+\sqrt{x^2+4}} \le e^{x^2/2} (1-\Phi(x)) \le \sqrt{2/\pi} \frac{2}{3x+\sqrt{x^2+8}}.
\end{equation}
We use the lower bound from~(\ref{ineq:szarek-werner}) to prove $f''>0$:
\[
  \sqrt{\pi/2} f''(x) \ge \frac{2(1+x^2)}{x+\sqrt{x^2+4}} - x = \frac{x^2 + 2 - x\sqrt{x^2+4}}{x+\sqrt{x^2+4}}
\]
and note that $(x^2+2)^2 = x^4 + 4x^2 + 4 > \naw{x\sqrt{x^2+4}}^2 = x^4 + 4x^2$.

To prove $f'''(x) = 2x(3+x^2)e^{x^2/2}(1-\Phi(x)) - (x^2+2)\sqrt{2/\pi}<0$ we use the upper bound from~(\ref{ineq:szarek-werner}):
\[
  2x(3+x^2)e^{x^2/2}(1-\Phi(x)) \le \sqrt{2/\pi} \frac{4x(3+x^2)}{3x+\sqrt{x^2+8}}.
\]
Now it suffices to prove $4x(3+x^2) < (x^2+2)(3x+\sqrt{x^2+8})$, or equivalently
\[
  x^3+6x < (x^2+2)\sqrt{x^2+8},
\]
which is obvious by calculating $\textup{LHS}^2 - \textup{RHS}^2 = -32 < 0$.
\end{proof}
Specializing Theorem~\ref{theorem:stein-approximation} to the random variables $Y_j = (n/k)^{1/2} \eps_{i,j} v_j$ we arrive with
\begin{prop}\label{prop:normal-approx}
  Assume $v \in \ell_2^n$, $\norm{v}_2 = 1$, $\norm{v}_\infty \le \alpha$ and let $W = P v$. Then for all $i = 1, \ldots, d$,
\[
  \abs{\E |W_i| - \sqrt{2/\pi}} \le \frac32 \alpha \sqrt{n/k}.
\]
\end{prop}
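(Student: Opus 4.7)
The plan is to apply Theorem~\ref{theorem:stein-approximation} directly, with the appropriate choice of the random variables $Y_j$, and then simplify the resulting bound using the hypothesis $\norm{v}_\infty \le \alpha$.

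First I would fix $i \in \{1,\ldots,d\}$ and set $Y_j := \sqrt{n/k}\,\eps_{i,j} v_j$ for $j = 1,\ldots,n$. I need to check the four hypotheses of Theorem~\ref{theorem:stein-approximation}: the $Y_j$ are independent (since the $\eps_{i,j}$'s are) and independent of $(\xi_{i,1},\ldots,\xi_{i,n})$ (by the independence of the three families from Section~\ref{section:notation}); they have mean zero by symmetry of $\eps_{i,j}$; the third moments are finite since $|Y_j|$ is bounded; and the normalization
\[
  \sum_{j=1}^n \E Y_j^2 = \frac{n}{k} \sum_{j=1}^n v_j^2 = \frac{n}{k}
\]
holds since $\norm{v}_2 = 1$. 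With $X_j = \xi_{i,j} Y_j$, the sum $S = \sum_j X_j$ coincides exactly with $W_i$ as defined in Proposition~\ref{prop:tail-of-Wi}.

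Next, Theorem~\ref{theorem:stein-approximation} yields
\[
  \bigabs{\E |W_i| - \sqrt{2/\pi}} \le \frac32 \cdot \frac{k}{n} \sum_{j=1}^n \E |Y_j|^3.
\]
I would then compute $\E |Y_j|^3 = (n/k)^{3/2} |v_j|^3$, which gives
\[
  \frac32 \cdot \frac{k}{n} \sum_{j=1}^n \E |Y_j|^3 = \frac32 \sqrt{n/k} \sum_{j=1}^n |v_j|^3.
\]

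Finally, I would bound the cubic sum by pulling out one factor in the $\ell_\infty$ norm and using $\norm{v}_2 = 1$:
\[
  \sum_{j=1}^n |v_j|^3 \le \norm{v}_\infty \sum_{j=1}^n v_j^2 \le \alpha.
\]
Combining these two displays yields the claimed bound $\frac32 \alpha \sqrt{n/k}$. There is no real obstacle here: all the work has already been done in Theorem~\ref{theorem:stein-approximation}, and this proposition is a one-paragraph specialization — the only things to verify are the normalization identity $\sum_j \E Y_j^2 = n/k$ and the standard $\ell_\infty$-$\ell_2$ estimate $\sum_j |v_j|^3 \le \norm{v}_\infty \norm{v}_2^2$.
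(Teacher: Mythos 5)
Your proposal is correct and matches the paper's own proof: the paper likewise specializes Theorem~\ref{theorem:stein-approximation} to $Y_j = (n/k)^{1/2}\eps_{i,j}v_j$, uses $\E|Y_j|^3 = (n/k)^{3/2}|v_j|^3$, and bounds $\sum_j |v_j|^3 \le \norm{v}_\infty \norm{v}_2^2 \le \alpha$. Your extra verification of the hypotheses (independence, mean zero, the normalization $\sum_j \E Y_j^2 = n/k$) is exactly what the paper leaves implicit.
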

\begin{proof}
  By~(\ref{ineq:normal-approximation}), 
  \[
    \abs{\E |W_i| - \sqrt{2/\pi}} \le \frac32 \frac{k}{n} \sum_{j=1}^n \naw{\frac{n}{k}}^{3/2} |v_j|^3 \le \frac32 \alpha \sqrt{n/k}.
  \]
\end{proof}

\subsection{Concentration of $\norm{P v}_q$ and the proof of the main result}

Let $v \in \ell_2^n$ be a unit vector and $W = P v$. In the two next subsections we shall provide the deviation bounds for $\bigabs{\norm{W}_2^2 - 1}$ and $\abs{\norm{W}_1 - \sqrt{2/\pi}}$ as well as combine all the auxiliary results to prove Theorems~\ref{thm:l2l2} and~\ref{thm:l2l1}.

\subsubsection{The $\ell_2$ case ($q=2$)}

\begin{prop}\label{prop:deviation-for-sum-of-squares}
Assume $v \in \ell_2^n$, $\norm{v}_2 = 1$, $\norm{v}_\infty \le \alpha$ and let $W = P v$. If (\ref{ineq:nka}) holds, then for any $t \ge 0$,
  \[
    \Pr(W_1^2 + \ldots + W_d^2 \ge d + t) \le \exp\naw{-\frac{t^2}{6.2 d + 12t}} + \exp\naw{-\frac{3k}{4n \A^2} + \log(2d)}
  \]
  and
  \[
    \Pr(W_1^2 + \ldots + W_d^2 \le d - t) \le \exp\naw{-\frac{t^2}{6 d}}.
  \]
\end{prop}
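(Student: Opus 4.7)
\medskip

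\noindent\textbf{Plan.} Since the rows of $P$ are built from the independent families $(\xi_{i,\cdot}, \eps_{i,\cdot})$, the coordinates $W_1,\ldots,W_d$ of $W=Pv$ are independent. As computed at the start of Section~3.3, $\E W_i^2 = 1$, so the variables $Z_i := W_i^2 - 1$ are independent, mean-zero, and $\sum_i W_i^2 - d = \sum_i Z_i$. The strategy for both tails is to apply a Bernstein-type bound to $\sum Z_i$.

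For the \emph{upper tail}, the numerology $6.2d + 12t = 2(3.1)d + 2(6)t$ is precisely the output of Bernstein's inequality (Theorem~\ref{theorem:classical-bernstein}) applied with $\sigma_i^2 = 3.1$ and $M=6$. The moment hypothesis to verify is $\E|Z_i|^p \leq (p!/2)\cdot 3.1\cdot 6^{p-2}$ for $p\geq 2$. By Lemma~\ref{lemma:central-moments} (with $Y = W_i^2$, $a=1$, $\Phi(t) = t^p$), $\E|Z_i|^p \leq \E W_i^{2p} + 1$. For $p = 2,3,4,5$ the numerical bounds in Lemma~\ref{lemma:a-few-first-moments} suffice; for $p \geq 6$ one converts the tail of Proposition~\ref{prop:tail-of-Wi} into a moment bound via $\E W_i^{2p} = 2p\int_0^\infty s^{2p-1}\Pr(|W_i|\geq s)\,ds$, splitting the integral at $s \sim \sqrt{k/n}/\A$ into a sub-Gaussian and a sub-exponential part and using assumption~(\ref{ineq:nka}). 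The second term in the bound has a natural truncation interpretation: the level $M_0 = \tfrac{3}{2}\sqrt{k/n}/\A$ is exactly where the exponent in Proposition~\ref{prop:tail-of-Wi} equals $3k/(4n\A^2)$, so a union bound over $i$ produces $\exp(-3k/(4n\A^2) + \log(2d))$; this term absorbs the contribution from the atypical event that some $|W_i|$ falls outside the sub-Gaussian regime and lets the Bernstein argument be run, in effect, on the truncated bounded variables (using $\Var(W_i^2)\leq \E W_i^4-1 \leq 2.1$ to obtain the $6.2d = 2\sigma^2 d$ factor).

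For the \emph{lower tail}, the purely Gaussian denominator $6d$ (no linear-$t$ correction) reflects that $-Z_i = 1 - W_i^2 \leq 1$ almost surely, so downward deviations of $\sum W_i^2$ from $d$ are automatically bounded. The elementary inequality $e^{-x} \leq 1 - x + x^2/2$ for $x \geq 0$ combined with Lemma~\ref{lemma:a-few-first-moments} gives
\[
  \E e^{-\lambda W_i^2} \leq 1 - \lambda + \tfrac{\lambda^2}{2}\E W_i^4 \leq \exp\bignaw{-\lambda + 1.55\lambda^2}.
\]
Multiplying over $i$, applying Chebyshev to $\Pr\bignaw{\sum W_i^2 \leq d - t} = \Pr\bignaw{e^{-\lambda\sum W_i^2} \geq e^{-\lambda(d-t)}}$, and optimizing over $\lambda$ produces a sub-Gaussian bound. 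The final tightening from $6.2d$ to $6d$ can be obtained either by a Bennett-type MGF refinement exploiting the one-sided bound $-Z_i \leq 1$, or by observing that the relevant regime is $t \leq d$ since $\sum W_i^2 \geq 0$.

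The main obstacle is the precise numerical constant tracking in the upper tail --- particularly the verification of Bernstein's moment condition for $p\geq 6$ via the tail integral of Proposition~\ref{prop:tail-of-Wi}, and the reconciliation of the truncation level $M_0$ with the Bernstein parameters $(\sigma^2, M) = (3.1, 6)$ so that the constants $6.2$, $12$, and $3/4$ emerge simultaneously.
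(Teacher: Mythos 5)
Your outline follows the paper's route: truncate at $s_0=\tfrac32\sqrt{k/n}/\A$ (whose exponent $s_0^2/3=3k/(4n\A^2)$ produces the second term after a union bound over $i\le d$), run the classical Bernstein inequality with $(\si_i^2,M)=(3.1,6)$, and treat the lower tail via the one-sided bound $1-W_i^2\le 1$. The one step that fails as written is the verification of the moment condition for $p\ge 6$ on the \emph{untruncated} variables $Z_i=W_i^2-1$. Beyond $s_0$ the tail of $|W_i|$ from Proposition~\ref{prop:tail-of-Wi} is only subexponential, so $W_i^2$ has merely a stretched-exponential tail $\exp(-c\sqrt{u})$, and the tail-integral bound you propose contributes a term of order $(2p)!\,\bignaw{2\sqrt{n/k}\,\A}^{2p}\le 2\,(2p)!\,(0.4)^p$ to $\E W_i^{2p}$ under~(\ref{ineq:nka}); this already exceeds $\frac{p!}{2}(3.1)\,6^{p-2}$ at $p=6$, and it outgrows $\frac{p!}{2}\si^2M^{p-2}$ for \emph{every} fixed $M$ as $p\to\infty$ (this is not just loose bounding: in worst-case configurations the stretched-exponential tail is genuine, so no condition of the form~(\ref{ineq:moments-like-exponential}) with fixed $M$ holds for $W_i^2$ uniformly in $p$). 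Hence Bernstein cannot be applied to $\sum_i Z_i$ directly; the moment condition must be checked for the truncated variables $X_i=W_i^2\,\II_{\{|W_i|\le s_0\}}$, for which $\Pr(X_i\ge t)\le 2e^{-t/3}$ gives $\E X_i^p\le 2\cdot 3^p\,p!$, and this does fit $(\si_i^2,M)=(3.1,6)$ for $p\ge6$ (combined with Lemma~\ref{lemma:central-moments}, $\E X_i\le 1$, and Lemma~\ref{lemma:a-few-first-moments} for $p\le5$). You gesture at this (``in effect, on the truncated bounded variables''), but the verification has to be carried out there, and one also needs $\E\tilde Z\le\E Z=d$ for $\tilde Z=\sum_i X_i$ so that $\Pr(\tilde Z\ge d+t)\le\Pr(\tilde Z\ge\E\tilde Z+t)$. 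A related slip: the parenthetical deriving $6.2d$ from $\Var(W_i^2)\le\E W_i^4-1\le2.1$ is inconsistent ($2\times2.1=4.2$); the factor $6.2=2\times3.1$ comes from $\Var(X_i)\le\E X_i^2\le\E W_i^4\le3.1$ --- truncation does not let you subtract $(\E W_i^2)^2=1$.

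For the lower tail, your bound $\E e^{-\lambda W_i^2}\le 1-\lambda+1.55\lambda^2\le e^{-\lambda+1.55\lambda^2}$ followed by optimization gives $\exp(-t^2/(6.2\,d))$, not the claimed $\exp(-t^2/(6d))$, and merely noting that only $t\le d$ matters does not by itself close this gap. The Bennett-type refinement you mention is what is needed, and it is exactly the paper's Lemma~\ref{lemma:lower-tail} applied to $Y_i=1-W_i^2\le1$ with $\E Y_i^2=\E W_i^4-1\le2.1$, which gives the denominator $2(e^{1/2.1}-1)(2.1)^2 d\le 6d$. (Keeping the quadratic term of $\log(1-\lambda+1.55\lambda^2)$ instead of using $1+x\le e^x$ would also suffice, but that is not what is written.) With these repairs your argument coincides with the paper's proof.
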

\begin{proof}
  Denote $Z = W_1^2 + \ldots + W_d^2$. Recall that $W_1, \ldots, W_d$ are independent, $\E W_i^2 = 1$ thus $\E Z = d$.

  First we estimate the upper tail. Let $s_0 = \frac32 \frac{(k/n)^{1/2}}{\A}$. Proposition~\ref{prop:tail-of-Wi} gives
  \begin{equation}
    \label{ineq:for-S-2}
    \Pr\naw{|W_i| \ge s} \le 2 \exp(-s^2/3) \quad \text{for $0 \le s \le s_0$}.
  \end{equation}
Set $X_i = W_i^2 \II_{\{|W_i|\le s_0\}}$ and $\tilde{Z} = \sum_{i=1}^d X_i$.
Clearly $\E \tilde{Z} \le \E Z = d$, hence the union bound and~(\ref{ineq:for-S-2}) imply
\begin{equation}\label{ineq:aux-for-upper-tail}
  \begin{split}
    \Pr(Z - d \ge t) &\le \Pr(\tilde{Z} - d \ge t) + \Pr\naw{\exists_i \ |W_i| > s_0} \\[1ex]
      &\le \Pr(\tilde{Z} - \E \tilde{Z} \ge t) + 2 d \exp\naw{-s_0^2/3}.
  \end{split}
\end{equation}
Next, we estimate $\Pr(\tilde{Z} - \E \tilde{Z} \ge t)$ using the classical Bernstein inequality for the sum of the variables $X_i - \E X_i$. To this end, we need to verify the condition~(\ref{ineq:moments-like-exponential}). Note that
\begin{equation}\label{ineq:Xip2}
  \E |X_i - \E X_i|^2 = \text{Var}(X_i) \le \E X_i^2 \le \E W_i^4 \le 3.1
\end{equation}
where the last inequality follows from Lemma~\ref{lemma:a-few-first-moments}. To bound higher moments of $|X_i - \E X_i|$ we use
Lemma~\ref{lemma:central-moments} and the fact that $\E X_i \le \E W_i^2 = 1$
which imply
  \begin{equation}\label{ineq:higher-moments}
    \E |X_i - \E X_i|^p \le \E X_i^p + 1 \quad \text{for any $p > 0$.}
  \end{equation}
  By~(\ref{ineq:for-S-2}), $\Pr(X_i \ge t) \le 2\exp(-t/3)$ for all $t > 0$, hence for any $p > 0$,
  \begin{equation}\label{ineq:for-Xip}
    \E X_i^p = \int_0^\infty p t^{p-1} \Pr\naw{X_i > t} \,dt \le 2 \int_0^\infty p t^{p-1} \exp\naw{-t/3} \, dt = 2 \cdot 3^p \Gamma(p+1).
  \end{equation}
  However, for $p=3,4,5$ we use Lemma~\ref{lemma:a-few-first-moments} to get
  \begin{equation}\label{ineq:for-Xip345}
    \E X_i^3 \le 17, \quad
    \E X_i^4 \le 127, \quad
    \E X_i^5 \le 1283.
  \end{equation}
  Using~(\ref{ineq:Xip2}) for $p=2$, combining~(\ref{ineq:higher-moments}) with (\ref{ineq:for-Xip345}) for $p=3, 4, 5$, and combining~(\ref{ineq:higher-moments}) with (\ref{ineq:for-Xip}) for $p \ge 6$, it is a matter of elementary calculations to verify that
  \[
    \E |X_i - \E X_i|^p \le \frac{p!}{2} \si^2 M^{p-2} \quad \text{for any integer $p \ge 2$}
  \]
  with $\si_i^2 = 3.1$ and $M=6$. Now the classical Bernstein inequality from Theorem~\ref{theorem:classical-bernstein} implies
\[
  \Pr(\tilde{Z} - \E \tilde{Z} \ge t) \le \exp\naw{- \frac{t^2}{6.2 d + 12 t}}
\]
which combined with~(\ref{ineq:aux-for-upper-tail}) yields the estimate for the upper tail.

For the lower tail we use Lemma~\ref{lemma:lower-tail} (see below):
\[
\begin{split}
  \Pr(Z - d \le -t) &= \Pr\naw{\sum_{i=1}^d \naw{1-W_i^2} \ge t} \\[1ex]
       &\le \exp\naw{-\frac{t^2}{2 (e^{(2.1)^{-1}} - 1)(2.1)^2 d}} \le \exp\naw{-\frac{t^2}{6 d}},
\end{split}
\]
since $\E\naw{1-W_i^2}^2 = \E W_i^4 - 2\E W_i^2 + 1 = \E W_i^4 - 1 \le 2.1$ by Lemma~\ref{lemma:a-few-first-moments}.
\end{proof}
\begin{lemma}
  \label{lemma:lower-tail}
  Let $Y_1, \ldots, Y_n$ are independent random variables, $\E Y_i = 0$, $\E Y_i^2 \le \si^2$ and $Y_i \le 1$ a.s. Then for any $t > 0$,
\[
  \Pr(Y_1 + \ldots + Y_n \ge t) \le \exp\naw{- \frac{t^2}{2\naw{e^{\si^{-2}} - 1} \si^4 n}}.
\]
\end{lemma}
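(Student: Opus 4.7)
The plan is to execute the usual Chernoff/Bennett recipe and then make a non-standard but explicit choice of the free parameter $\lambda$ that converts the resulting Bennett-type bound into the sub-Gaussian form advertised in the lemma.

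I would start from the classical fact that the map $\phi(x) := (e^x - 1 - x)/x^2$, extended continuously by $\phi(0) := 1/2$, is non-decreasing on all of $\R$; this follows by differentiation, reducing to the sign analysis of the numerator $(x-2)e^x + x + 2$, which can be done via two differentiations. Applied with $x = \lambda y$ for fixed $\lambda > 0$ and $y \le 1$, this yields $e^{\lambda y} \le 1 + \lambda y + (e^\lambda - 1 - \lambda)\,y^2$. Taking expectations under $\E Y_i = 0$ and $\E Y_i^2 \le \sigma^2$, and then using $1+u \le e^u$ and independence, produces the moment generating function estimate $\E e^{\lambda S} \le \exp\bignaw{n\sigma^2 (e^\lambda - 1 - \lambda)}$. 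Markov's inequality then gives, for every $\lambda > 0$, the Bennett-type bound
\[
  \Pr(S \ge t) \le \exp\bignaw{-\lambda t + n\sigma^2 (e^\lambda - 1 - \lambda)}.
\]

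Since $Y_i \le 1$ a.s.\ forces $S \le n$, the claim is trivial for $t > n$, so I may assume $0 < t \le n$. Writing $v := \sigma^{-2}$, I would choose
\[
  \lambda := \frac{t}{(e^v - 1)\,\sigma^4\, n}.
\]
The elementary inequality $(e^v - 1)/v \ge 1$ combined with $t \le n$ guarantees $\lambda \le v$, so monotonicity of $\phi$ applied once more yields $e^\lambda - 1 - \lambda \le \lambda^2 \sigma^4 (e^v - 1 - v)$. Substituting into the Chernoff bound and simplifying (using $\sigma^2 = 1/v$) leads to
\[
  \Pr(S \ge t) \le \exp\naw{-\frac{t^2}{(e^v - 1)\,\sigma^4\, n} \biggnaw{1 - \frac{e^v - 1 - v}{v(e^v - 1)}}}.
\]

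The only substantive analytic obstacle is the final elementary inequality $(e^v - 1 - v)/(v(e^v - 1)) \le 1/2$ for $v > 0$, equivalently $(2-v)e^v \le 2 + v$. I would prove it by the two-derivative trick: setting $g(v) := 2 + v - (2-v)e^v$, one has $g(0) = g'(0) = 0$ and $g''(v) = v e^v > 0$ on $(0, \infty)$, hence $g \ge 0$. Consequently the bracket in the last display is at least $1/2$, and the exponent is at most $-t^2/\bignaw{2(e^{\sigma^{-2}} - 1)\sigma^4 n}$, which is exactly the bound asserted in the lemma.
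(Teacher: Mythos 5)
Your proof is correct and is essentially the paper's own argument: both are Chernoff bounds exploiting $Y_i \le 1$ a.s., and your choice $\lambda = \frac{t}{(e^{\sigma^{-2}}-1)\sigma^4 n}$ coincides exactly with the paper's $\lambda = \frac{t}{\frac{e^a-1}{a} n \sigma^2}$ with $a = \sigma^{-2}$. The only difference is one of packaging: the paper absorbs the ``Bennett-to-sub-Gaussian'' step into the elementary bound $e^x \le 1 + x + \frac{e^a-1}{a}\,\frac{x^2}{2}$ for $x \le a$ (stated there without proof), whereas you first derive the Bennett-type MGF estimate and then carry out, and actually verify, the equivalent elementary inequality $(2-v)e^v \le 2+v$ at the end.
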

\begin{proof}
  Clearly, we may assume $t \le n$. Let $a > 0$ to be specified later. Using the inequality
\[
  e^x \le 1 + x + \frac{e^a - 1}{a} x^2/2,
\]
which is valid for all $x \le a$, we bound the Laplace transform of $Y_i$: for any $\lambda \le a$,
\[ \begin{split}
  \E \exp(\lambda Y_i) &\le \E \naw{1 + \lambda Y_i + \frac{e^a - 1}{a} \lambda^2 Y_i^2/2} \le 1 + \frac{e^a - 1}{a} \lambda^2 \si^2/2 \\[1ex]
  &\le \exp\naw{ \frac{e^a - 1}{a} \lambda^2 \si^2/2 }.
\end{split} \]
Therefore, for any $\lambda \le a$,
\begin{equation}
  \label{ineq:upper-tail-of-bounded-rv}
  \Pr(Y_1 + \ldots + Y_n \ge t) \le \exp\naw{-\lambda t + \frac{e^a - 1}{a} \lambda^2 n \si^2 / 2}.
\end{equation}
Taking $\lambda = \frac{t}{\frac{e^a - 1}{a} n \si^2}$ and $a = \si^{-2}$ we clearly have $\lambda \le a$ thus~(\ref{ineq:upper-tail-of-bounded-rv}) finishes the proof.
\end{proof}

\begin{proof}[Proof of Theorem~\ref{thm:l2l2}]
  Fix a unit vector $u \in \ell_2^n$.
  Let $V = H D u$ and set $\A := \frac{\sqrt{2e \log(2n/\delta)}}{\sqrt{n}}$.
  By Proposition~\ref{prop:hadamard-small-ell-infty},
\begin{equation}\label{ineq:V-in-A}
\Pr\naw{\norm{V}_\infty \le \A} \ge 1-\delta.
\end{equation}
Also, recall $\norm{V}_2 = 1$ a.s.

Now, assume $v \in \ell_2^n$ is a fixed unit vector satisfying $\norm{v}_\infty \le \A$. We shall show that
\begin{equation}
  \label{ineq:dev-for-l2}
  \Pr\naw{\frac{1}{\sqrt{d}} \norm{P v}_2 \ge 1+\eps} \le 2\delta/3,
  \qquad \text{and} \qquad
  \Pr\naw{\frac{1}{\sqrt{d}} \norm{P v}_2 \le \frac{1}{1+\eps}} \le \delta/3.
\end{equation}
Since $k \ge 20e \log(2n/\delta)$, (\ref{ineq:nka}) is satisfied. For the first inequality in~(\ref{ineq:dev-for-l2}), Proposition~\ref{prop:deviation-for-sum-of-squares} implies
\[ \begin{split}
  \Pr\naw{\frac{1}{\sqrt{d}} \norm{P v}_2 \ge 1+\eps} &\le
  \Pr\naw{\norm{P v}_2^2 \ge d + 2d \eps} \\[1ex]
  &\le \exp \naw{ - \frac{4d \eps^2}{6.2 + 24 \eps}} + \exp\naw{-\frac{3k}{8e \log(2n/\delta)} + \log(2d)}.
\end{split} \]
By an elementary calculation one can check that for $d$ and $k$ satisfying the assumptions of the theorem, both $\exp$ terms do not exceed $\delta/3$. For the second inequality in~(\ref{ineq:dev-for-l2}), Proposition~\ref{prop:deviation-for-sum-of-squares} gives
\[
  \Pr\naw{\frac{1}{\sqrt{d}} \norm{P v}_2 \le \frac{1}{1+\eps}} \le
  \Pr\naw{\norm{P v}_2^2 \le d \naw{1- \frac{2\eps}{1+2\eps}}}
  \le \exp \naw{ -\frac{2 d \eps^2}{3(1+2\eps)^2}},
\]
where the last $\exp$ term is $\le \delta/3$ for $d$ satisfying the hypothesis of the theorem. Finally, since the matrix $P$ and the vector $V = HD u$ are independent, conditioning on $V$ and combining (\ref{ineq:V-in-A}) with (\ref{ineq:dev-for-l2}) complete the proof.
\end{proof}

\subsubsection{The $\ell_1$ case ($q=1$)}

\begin{prop}
  \label{prop:deviation-for-sum-of-abs}
  Assume $v \in \ell_2^n$, $\norm{v}_2 = 1$, $\norm{v}_\infty \le \alpha$ and let $W = P v$. If (\ref{ineq:nka}) holds, then for any $t \ge 0$,
\[
  \Pr\naw{\bigabs{ |W_1| +\ldots + |W_d| - (\E |W_1| + \ldots + \E |W_d|) } \ge t} \le 2 \exp\naw{- \frac{t^2}{2d + 8t/3}}.
\]
\end{prop}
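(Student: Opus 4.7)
The plan is to apply the classical Bernstein inequality (Theorem~\ref{theorem:classical-bernstein}) to the sum of the independent centered random variables $Z_i := |W_i| - \E|W_i|$, $i = 1, \ldots, d$. Comparing with the target denominator $2d + 8t/3$, this requires verifying the moment condition~(\ref{ineq:moments-like-exponential}) with $\sigma_i^2 = 1$ (so that $\sum \sigma_i^2 = d$) and $M = 4/3$; the factor of $2$ in front of the exponential then comes from a union bound over the upper and lower tails, since $Z_i$ and $-Z_i$ satisfy the same moment bounds.

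For $p=2$ the condition is immediate: $\Var(|W_i|) \le \E W_i^2 = 1$. For $p \ge 3$, Lemma~\ref{lemma:central-moments} applied with $Y = |W_i|$, $a = \E|W_i|$, $\Phi(x)=x^p$, followed by Jensen's inequality, gives
\[ \E|Z_i|^p \le \E|W_i|^p + (\E|W_i|)^p \le 2\E|W_i|^p, \]
so it suffices to bound $\E|W_i|^p$. I would treat three ranges of $p$ separately. For $p \in \{3,5\}$ use Cauchy--Schwarz to express the odd moment via two even moments controlled by Lemma~\ref{lemma:a-few-first-moments}; for $p \in \{4, 6, 8, 10\}$ apply Lemma~\ref{lemma:a-few-first-moments} directly; for $p \ge 11$ integrate the tail estimate from Proposition~\ref{prop:tail-of-Wi}. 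For the tail integration, note that under assumption~(\ref{ineq:nka}) we have $\beta := (n/k)^{1/2}\alpha \le 1/\sqrt{10}$, so splitting the integral at $t_0 = 3/\beta$ separates a Gaussian regime (where $\Pr(|W_i|\ge t) \le 2\exp(-t^2/4)$, since the denominator in the exponent of Proposition~\ref{prop:tail-of-Wi} is at most $4$) from a sub-exponential regime (where $\Pr(|W_i|\ge t) \le 2\exp(-3t/(4\beta))$, since the denominator is at most $(4/3)\beta t$). Both contributions can then be computed in closed form in terms of $\Gamma$-functions.

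The main obstacle is the numerical bookkeeping: $M = 4/3$ is essentially the smallest constant that makes the Bernstein growth $(p!/2)M^{p-2}$ still dominate $2\E|W_i|^p$ uniformly in $p$. Fortunately the sub-exponential rate $3/(4\beta) \ge 3\sqrt{10}/4$ guaranteed by~(\ref{ineq:nka}) is large enough that in the asymptotic regime $p \ge 11$ the exponential-tail contribution is bounded by $2 p! (4/(3\sqrt{10}))^p$, which is comfortably below $(p!/2)(4/3)^{p-2}$; meanwhile the Gaussian part grows only as $p \cdot 2^p \Gamma(p/2)$, which is subdominant. For the intermediate values $p \in \{3, \ldots, 10\}$ the explicit bounds from Lemma~\ref{lemma:a-few-first-moments} leave a wide margin (e.g.\ $2\E|W_i|^3 \le 2\sqrt{3.1} < 4$, matching the threshold $(3!/2)(4/3) = 4$). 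Once the moment condition is in hand, two applications of Theorem~\ref{theorem:classical-bernstein} and a union bound conclude the proof.
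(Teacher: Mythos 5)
Your proposal follows essentially the same route as the paper's proof: center $|W_i|$, verify the Bernstein moment condition~(\ref{ineq:moments-like-exponential}) with $\sigma_i^2=1$ and $M=4/3$ via Lemma~\ref{lemma:a-few-first-moments}, Cauchy--Schwarz for odd moments, Lemma~\ref{lemma:central-moments} for the centering, and integration of the tail bound from Proposition~\ref{prop:tail-of-Wi} for large $p$, then apply Theorem~\ref{theorem:classical-bernstein}; your use of Jensen instead of Proposition~\ref{prop:normal-approx} to absorb $(\E|W_i|)^p$ is a harmless simplification (the tightest case $p=3$ still clears the threshold, $2\sqrt{3.1}<4$). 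The only slip is that your stated case split skips $p=7$ and $p=9$, but these are handled by the same Cauchy--Schwarz step using the eighth and tenth moments from Lemma~\ref{lemma:a-few-first-moments} (or by your tail-integration argument, which in fact already works for all $p\ge 7$).
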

\begin{proof}
  Denote $Z = |W_1| + \ldots + |W_d|$.
  To estimate $\Pr(|Z-\E Z| \ge t)$ we use Bernstein inequality (Theorem~\ref{theorem:classical-bernstein}). To this
  end, we bound the moments of $|W_i|$. By Proposition~\ref{prop:tail-of-Wi} and integration by parts, for any integer $p \ge 1$,
  \[ \begin{split}
    \E |W_i|^p &= p \int_0^\infty t^{p-1} \Pr(|W_i| > t) \, dt \\[1ex]
                  &\le 2p \int_0^\infty t^{p-1} e^{-t^2/4} \, dt
                     + 2p \int_0^\infty t^{p-1} \exp\naw{-\frac{3t}{4 (n/k)^{1/2} \A}} \, ds \\[1ex]
                  &= p 2^p \Gamma(p/2) + 2p! \naw{\frac43 (n/k)^{1/2} \A}^p.
  \end{split} \]
Plugging the condition~(\ref{ineq:nka}) we obtain
  \begin{equation*}%    \label{ineq:all-moments-of-S}
    \E |W_i|^p \le p \Gamma(p/2) 2^p + 2p! \naw{\frac{4}{3\sqrt{10}}}^p.
  \end{equation*}
  However, for $p \le 6$ we can provide better bounds. For $p = 4, 6$ we can use Lemma~\ref{lemma:a-few-first-moments} and for $p=3, 5$ we use the Cauchy-Schwarz inequality $\E |W_i|^p \le \sqrt{\E |W_i|^{p-1} \E |W_i|^{p+1}}$.

Next, Lemma~\ref{lemma:central-moments} and~Proposition~\ref{prop:normal-approx} combined with~(\ref{ineq:nka}) imply
  \[ \begin{split}
    \E \bigabs{|W_i| - \E |W_i|}^p &\le \E |W_i|^p + \naw{\E|W_i|}^p \\[1ex]
      &\le \E |W_i|^p + \naw{\sqrt{2/\pi} + \frac{3}{2\sqrt{10}}}^p \le \E |W_i|^p + (4/3)^p.
  \end{split} \]
Also by Proposition~\ref{prop:normal-approx} and~(\ref{ineq:nka}),
  \[ \begin{split}
    \E\bigabs{|W_i|-\E |W_i|}^2 &= \E |W_i|^2 - (\E |W_i|)^2 \\[1ex]
      &\le 1 - \naw{\sqrt{2/\pi} - \frac{3}{2\sqrt{10}}}^2 \le \frac{9}{10}.
  \end{split} \]
All these yield the bound
\begin{equation*}%  \label{ineq:wi-moment-bound}
  \E\bigabs{|W_i|-\E |W_i|}^p \le \frac{p!}{2} \si_i^2 M^{p-2}
\end{equation*}
for any integer $p \ge 2$ with $\si_i^2 = 1$ and $M = 4/3$, as illustrated by the following table:
\begin{center}
  \begin{tabular}{| r | r | r |}
     \hline
     $p$ & $\E \bigabs{|W_i| - \E |W_i|}^p$ & $\frac{p!}{2} \si_i^2 M^{p-2}$ \\
\hline
    2 & $\le 0.9$ & 1 \\
    3 & $\le 3.83$ & 4 \\
    4 & $\le 5.73$ & $\approx 21$ \\
    5 & $\le 10.6$ & $\approx 142$ \\
    6 & $\le 21.24$ & $\approx 1138$ \\ \hline
   $\ge 7$ & $\le p \Gamma(p/2) 2^p + 2p! \naw{\frac{4}{3\sqrt{10}}}^p + (4/3)^p$ &
$\frac{p!}{2} (4/3)^{p-2}$ \\ \hline
\end{tabular}
\end{center}
Now, Bernstein's inequality (Theorem~\ref{theorem:classical-bernstein}) completes the proof.
\end{proof}

\begin{proof}[Proof of Theorem~\ref{thm:l2l1}]
As in the proof of Theorem~\ref{thm:l2l2}, it is enough to show
\begin{equation}
  \label{ineq:dev-for-l1}
  \Pr\naw{\abs{\frac{1}{d} \norm{P v}_1 - \sqrt{2/\pi}} \ge \eps \sqrt{2/\pi}} \le \delta
\end{equation}
for any unit $v \in \ell_2^n$ satisfying $\norm{v}_\infty \le \A := \sqrt{2e \log(2n/\delta)}/\sqrt{n}$.

Fix $\kappa \in (0,1)$. The condition~(\ref{ineq:nka}) is satisfied since $k \ge 20 e \log(2n/\delta)$. Proposition~\ref{prop:deviation-for-sum-of-abs} used for $t = d \kappa \eps \sqrt{2/\pi}$ implies
\[
  \Pr\naw{\frac{1}{d}\bigabs{ \norm{P v}_1 - \E \norm{P v}_1} \ge \kappa \eps \sqrt{2/\pi}} \le 2 \exp\naw{- \frac{\kappa^2 (2/\pi) d \eps^2}{2 + \sqrt{2/\pi}\frac83 \kappa \eps}} \le
  \delta,
\]
where the last inequality follows from the assumption on $d$, while
Proposition~\ref{prop:normal-approx} yields
  \[ 
    \abs{\frac{1}{d} \E \norm{P_v}_1 - \sqrt{2/\pi}} 
\le \frac{3 \sqrt{2e \log(2n/\delta)}}{2 \sqrt{k}} \le (1-\kappa) \eps \sqrt{2/\pi},
\]
where the last inequality follows from the assumption on $k$.
\end{proof}

\paragraph{Acknowledgments:} A preliminary version of this work was presented at the conference ``Perspectives in High Dimensions'', held at Case Western Reserve University in August 2010. The author would like to thank Prof. Louis H. Y. Chen for a discussion on normal approximation problem encountered in this work
(Theorem~\ref{theorem:stein-approximation}).

\end{document}